\theoremstyle{plain}
\newtheorem{lemma}{Lemma}
\newtheorem{definition}{Definition}
\newtheorem{theorem}{Theorem}
\newcommand{\newremark}[2]{
  \theoremstyle{plain}
  \theoremheaderfont{\normalfont\itshape}
  \theorembodyfont{\normalfont}
  \theoremseparator{}
  \theoremsymbol{}
  \newtheorem{#1}[theorem]{#2}
}
\newcommand{\newproblem}[2]{
  \theoremstyle{nonumberplain}
  \theoremheaderfont{\normalfont\bfseries}
  \theorembodyfont{\normalfont}
  \theoremseparator{}
  \theoremsymbol{}
  \newtheorem{#1}[theorem]{#2}
}
\theoremstyle{nonumberplain}
\newtheorem{proof}{Proof}
\newcommand\blfootnote[1]{%
    \begingroup  
    \renewcommand\thefootnote{}\footnote{#1}
    \addtocounter{footnote}{0}
    \endgroup
}
\title{Bayesian Nash Equilibrium Seeking for Distributed Incomplete-information Aggregative Games\blfootnote{This work was supported by the National Natural Science Foundation
of China (No. 62173250).}}
\author{Hanzheng Zhang\footnote{Key Laboratory of Systems and Control, Academy of Mathematics and Systems Science, Beijing, China (zhanghanzheng@amss.ac.cn, qin@iss.ac.cn).}\ \footnote{School of Mathematical Sciences, University of Chinese Academy of Sciences, Beijing, China.}, Guanpu Chen\footnote{JD Explore Academy, Beijing, China (chengp@amss.ac.cn).}, Huashu Qin\footnotemark[2]}
\date{}
\newcommand\keywords[1]{\textbf{Keywords}: #1}
\begin{document}

\maketitle

\begin{abstract}
In this paper, we consider a distributed Bayesian Nash equilibrium (BNE) seeking problem in incomplete-information aggregative games, which is a generalization of Bayesian games and deterministic aggregative games. We handle the aggregation function for distributed incomplete-information situations. Since the feasible strategies are infinite-dimensional functions and lie in a non-compact set, the continuity of types brings barriers to seeking equilibria. To this end, we discretize the continuous types and then prove that the equilibrium of the derived discretized model is an $\epsilon$-BNE. On this basis, we propose a distributed algorithm for an $\epsilon$-BNE and further prove its convergence.\\
\par\keywords{aggregative games, Bayesian games, equilibrium approximation, distributed algorithms}
\end{abstract}

\section{Introduction}
In recent years, distributed design for multi-agent decision and control has become increasingly important and many distribution algorithms have been proposed for various games \cite{chen2021,koshal2016,liang2017,xu2022}. Aggregative games, as non-cooperative distributed games, are widely investigated. In aggregative games, each player's cost function depends on its action and an aggregate of the decisions taken by all players, which is obtained via network communication. {\cite{koshal2016} proposed distributed synchronous and asynchronous algorithms for aggregative games, and analyzed their convergence, while \cite{liang2017} considered coupled constraints in aggregative games and provided a distributed continuous-time algorithm for the generalized Nash equilibrium. In addition, \cite{xu2022} proposed a distributed approximation algorithm using inscribed polyhedrons to estimate local set constraints.}

Considering uncertainties in reality, there are various incomplete-information models, and among them, Bayesian games are one of the most important and have a wide range of application \cite{akkarajitsakul2011,chen2020,grobhans2013,krishna2009}. In Bayesian games, players cannot obtain complete characteristics of the other players, which are called types subjected to a distribution. Each player knows its own type and has access to the distribution of all types \cite{harsanyi1967}. Due to the broad applications, the existence and computation of the Bayesian Nash equilibrium (BNE) are fundamental problems in the investigation of various Bayesian games. To this end, many works have studied the BNE of discrete-type games \cite{akkarajitsakul2011,bhaskar2016}, by fixing the types and converting the games to deterministic ones. In addition to the centralized models, there are also many works on distributed Bayesian games \cite{akkarajitsakul2011,krishnamurthy2013}, where players make decisions based on their local and neighbor's information.

However, most of the aforementioned works focus on discrete-type Bayesian games. In fact, continuous-type Bayesian games are also widespread in various fields such as engineering and economics \cite{grobhans2013,krishna2009}. The continuity of types poses challenges in seeking and verifying BNE. Specifically, in these games, the feasible strategies are infinite-dimensional functions and thus their sets are not compact \cite{guos2021,milgrom1985}. Lack of compactness, we cannot apply the fixed point theorem for the existence of BNE, let alone seek a BNE. Fortunately, many pioneers have tried to study the existence of BNE in such continuous-type situations and design its computation. For instance, \cite{milgrom1985} analyzed the existence of BNE in virtue of equicontinuous payoffs and absolutely continuous information, while \cite{meirowitz2003} investigated the situation when best responses are equicontinuous. Afterwards, \cite{guos2021} provided an equivalent condition of the equicontinuity and proposed an approximation algorithm. Moreover, \cite{ui2016} regarded the BNE as the solution to the variational inequality and provided a sufficient condition of the existence of BNE, while \cite{guow2021} gave two variational-inequality-based algorithms when the forms of strategies are prior knowledge.

Therefore, using a Bayesian scheme to analyze an incomplete-information aggregative game is worth investigating, because it can be regarded as a generalization of both deterministic aggregative games \cite{koshal2016,liang2017,xu2022} and Bayesian games \cite{guos2021,harsanyi1967,milgrom1985}. Nevertheless, continuous-type Bayesian aggregative games are more challenging than deterministic aggregative games and discrete-type Bayesian games. {On the one hand, in the incomplete-information models, since the strategies are functions of random variables, i.e., types, as the aggregate of strategies, the aggregation function should also be a function of a random variable, while the existing aggregation functions for deterministic cases \cite{koshal2016,liang2017,xu2022} cannot be applied to the incomplete-information cases.} On the other hand, to seek a continuous-type BNE in a distributed manner, we need an effective method to convert the infinite-dimensional BNE seeking problem into a finite-dimensional one, which also has to be friendly to distributed design.

Specifically, we consider seeking a continuous-type BNE in distributed aggregative games in this paper, where each player has its own type following a joint distribution, and makes decisions based on its type, local information, and the aggregate of all players' decisions. Players exchange their information and estimate the aggregate via time-varying graphs. The challenges lie in how to handle the aggregation function with incomplete-information and how to seek a BNE in this continuous-type model. The contributions are summarized as follows.

\begin{itemize}
\item We consider a distributed aggregative Bayesian game with continuous types, where each player has access to its own type and the aggregate. Such generalized models can be regarded as not only multi-player Bayesian games \cite{guos2021,harsanyi1967,milgrom1985} if each player has access to strategies of all players, but also deterministic aggregative games \cite{koshal2016,liang2017,xu2022} by letting out the uncertainties. Moreover, we focus on the incomplete-information aggregation function when players adopt non-single-valued functions as strategies, which can turn to the average of strategies when types are deterministic \cite{koshal2016}.

\item We provide a BNE approximation method by discretizing the continuous types. By establishing a discretized model, we prove that the BNE of the derived model is an $\epsilon$-BNE of the continuous-type model. Compared with existing methods \cite{guos2021,huangl2021} on continuous-type Bayesian games, our method provides an explicit error bound as well as a practical implementation beyond heuristics \cite{guow2021}.

\item Based on the discretization, we propose a gradient-descent based distributed algorithm for seeking a BNE of the discretized model, namely an $\epsilon$-BNE of the original continuous-type model. Furthermore, we prove that the proposed algorithm generates a sequence convergent to an $\epsilon$-BNE of the original continuous-type model using Lyapunov theory.
\end{itemize}

The paper is arranged as follows. Section \ref{sec_pre} summarizes the preliminaries. Section \ref{sec_form} formulates the problem. Section \ref{sec_dis} provides a discretization method to generate an $\epsilon$-BNE, while Section \ref{sec_alg} gives a distributed algorithm for the derived approximate BNE and analyzes the convergence of the algorithm. Section \ref{sec_exp} provides numerical simulations for illustration. Finally, Section \ref{sec_con} concludes the paper.

\section{Preliminaries}\label{sec_pre}
\subsection{Notations}
Denote the $n$-dimensional real Euclidean space by $\mathbb{R}^n$ and its measure by $\mu$. $B(a,\varepsilon)$ is a ball with the center $a$ and the radius $\epsilon>0$. Denote $col(x_1,\dots,x_n)=(x_1^T,\dots,x_n^T)^T$ and $1_n\in\mathbb{R}$ as the column vector with all entries equal to 1. For an integer $n>0$, denote $[n]=\{1,\dots,n\}$. For column vectors $x,y\in\mathbb{R}^n$, $\left<x,y\right>$ denotes the inner product, and $\lVert\cdot\rVert$ denotes the 2-norm. For a matrix $W\in\mathbb{R}^{n\times n}$, denote its element in the $i$-th row and $j$-th column by $[W]_{ij}$, $i,j\in[n]$. A function is piecewise continuous if it is continuous except at finite points in its domain. For $\boldsymbol{x}=(x_1,\dots,x_n)$, define the vector with entries of $\boldsymbol{x}$ except for $i$ as $x_{-i}=(x_1,\dots,x_{i-1},x_{i+1},\dots,x_n)$. For $\boldsymbol{s}=x_1\times\cdots\times x_n$, denote its surface differential by $d\boldsymbol{s}$.

\subsection{Convex analysis}
A set $C\subseteq\mathbb{R}^n$ is convex if $\lambda z_1+(1-\lambda)z_2\in C,\ \forall z_1,z_2\in C$ and $0\leq\lambda\leq1$. For a closed convex set $C\subseteq\mathbb{R}^n$, a projection map $\Pi_C:\mathbb{R}^n\to C$ is defined as $\Pi_C(x)=\arg\min_{y\in C}\lVert x-y\rVert$, and holds $\left<x-\Pi_C(x),\Pi_C(x)-y\right>\geq0$, $\forall y\in C$. A function $f:\mathbb{R}^n\to\mathbb{R}$ is (strictly) convex if $f(\lambda x_1+(1-\lambda)x_2)(<)\leq \lambda f(x_1)+(1-\lambda)f(x_2)$, $\forall x_1,x_2\in\mathbb{R}^n$ and $\lambda\in(0,1)$.

For a convex differentiable function $f$, the gradient of $f$ at point $x$ is denoted by $\nabla f$, satisfying $f(y)\geq f(x)+\left<y-x,\nabla f(x)\right>$, $\forall y\in\mathbb{R}^n$. For a convex differentiable function $f(x_1,\dots,x_n)$, denote $\nabla_i f$ as the differential of $f$ with respect to $x_i$. If $f$ is (strictly) convex, the gradient of $f$ satisfies $\left<\nabla f(x)-\nabla f(y),x-y\right>(>)\geq 0$.

\subsection{Bayesian games}
Consider a Bayesian game denoted by $G=(I,\{X_i\}_{i\in I},\boldsymbol\Theta,P(\cdot),\{f_i\}_{i\in I})$ with a set of players $I=[n]$, where player $i$ has the feasible action set $X_i\subseteq\mathbb{R}^{m_i}$ and the cost function $f_i(x_i,x_{-i},\theta_i)$. For $i\in I$, the incomplete information of player $i$ is referred to the \textit{type}, denoted by $\theta_i\in\Theta_i\subseteq\mathbb{R}$, and $\boldsymbol\theta=(\theta_1,\dots,\theta_n)\in\boldsymbol\Theta$ is a random variable mapping from the probability space $(\Omega,\mathcal{B},P)$ to $\mathbb{R}^n$. Denote the density function of $P$ by $p$ with the marginal density $p_i(\theta_i)=\int_{\Theta_{-i}}p(\theta_i,\theta_{-i})d\theta_{-i}$ and the conditional probability density $p_i(\theta_{-i}|\theta_i)=p(\theta_i,\theta_{-i})/p_i(\theta_i)$, $i\in[n]$ Throughout the paper, we use $\boldsymbol\theta$ to denote a random variable mapping from $(\Omega,\mathcal{B},P)$ to $\mathbb{R}^n$, or a deterministic element in $\mathbb{R}^n$ depending on the context.

Here each player $i\in I$ only knows its own type but not those of its rivals. As in Bayesian games \cite{harsanyi1967}, the joint distribution $P$ is public information. The cost function of player $i$ is defined as $f_i:\boldsymbol X\times\Theta_i\to\mathbb{R}$, depending on all player's actions and the type of $i$. Each player adopts a strategy $\sigma_i$, which is a measurable function mapping from its type set $\Theta_i$ to its action set $X_i$, and $\sigma_i(\theta_i)$ is the action taken by player $i$ when it receives the type $\theta_i\in\Theta_i$. Denote the possible strategy set of player $i$ by $\Sigma_i$. Define Hilbert spaces $\mathcal{H}_i$ consisting of functions $\beta:\mathbb{R}\to\mathbb{R}^{m}$ with the inner product $\left<\sigma,\sigma'\right>_{\mathcal{H}_i}=\int_{\theta_i\in\Theta_i}\left<\sigma_i,\sigma_i'\right>p_i(\theta_i)d\theta_i,\ \sigma,\sigma'\in\mathcal{H}_i,i\in I.$ Thus, the strategy set $\Sigma_i$ is a subset of the Hilbert space $\mathcal{H}_i$.

\subsection{Graph theory}
An undirected graph $\mathcal{G}$ is defined by $\mathcal{G}(\mathcal{V},\mathcal{E})$, where $\mathcal{V}=[n]$ is the node set and $\mathcal{E}\subseteq\mathcal{V}\times\mathcal{V}$ is the edge set. Node $j$ is a neighbor of $i$ if $(j,i)\in\mathcal{E}$, and thus node $i$ is a neighbor of $j$. Take $(i,i)\in\mathcal{E}$. A path in $\mathcal{G}$ from $i_1$ to $i_k$ is an alternating sequence $i_1e_1i_2\cdots i_{k-1}e_{k-1}i_k$ of nodes such that $e_j=(i_j,i_{j+1})\in\mathcal{E}$ for $j\in[k-1]$. $W=([W]_{ij})\in\mathbb{R}^{n\times n}$ is the adjacency matrix such that $[W]_{ij}>0$ if $(j,i)\in\mathcal{E}$ and $[W]_{ij}=0$ otherwise. $\mathcal{G}$ is connected if there is a path in $\mathcal{G}$ from $i$ to $j$ for any pair nodes $i,j\in\mathcal{G}$.

\section{Problem Formulation}\label{sec_form}
Consider an incomplete-information aggregative game, denoted by $(I,\{X_i\}_{i\in I},\boldsymbol{\Theta},P,$
$\{f_i\}_{i\in I})$. Each player $i\in I=[n]$ has its type $\theta_i\in\Theta_i\subseteq\mathbb{R}$, feasible action set $X_i\subseteq\mathbb{R}^m$, and cost function $f_i(x_i,\tilde{x},\theta_i)$, where the type $\boldsymbol{\theta}=(\theta_1,\dots,\theta_n)$ follows the distribution $P(\boldsymbol{\theta})$ with the density $p(\boldsymbol{\theta})$ and $\tilde{x}$ is the aggregate of all players' decisions. The type set $\Theta_i$ is compact and without loss of generalization, take $\Theta_i=[\underline{\theta},\overline{\theta}]$. Player $i$ adopts a strategy $\sigma_i$, which is a measurable function from its type set $\Theta_i$ to its action set $X_i$. That is, at type $\theta_i\in\Theta_i$, player $i$ will take $\sigma_i(\theta_i)\in X_i$ as its action. Denote the strategy set of player $i$ by $\Sigma_i$. 

Different from deterministic games, consider the aggregation functions in incomplete-information situations that players take non-single-valued functions of types as strategies, which means that the aggregate ought to be a function of types. Here, the aggregation function is shown as follows.
\begin{equation}\label{eq_aggre}
\bar{\sigma}(\tilde{\theta})=\int_{\sum_{i=1}^{n}\theta_i=n\tilde{\theta}}\frac{\sum_{i=1}^n\sigma_i(\theta_i)}{n}\bar{p}(\theta_1,\dots,\theta_n|\tilde{\theta})d\boldsymbol{s},
\end{equation}
where $\boldsymbol{s}=\theta_1\times\cdots\times\theta_n$, $\tilde{\theta}=(\theta_1+\dots+\theta_n)/n$ is a random variable following the distribution
$$\bar{P}(\tilde{\theta})=\int_{\sum_{i=1}^{n}\theta_i=n\tilde{\theta}}p(\theta_1,\dots,\theta_n)d\boldsymbol{s},$$
with the density function $\bar{p}(\tilde{\theta})$ and the conditional probability density function $$\bar{p}(\tilde{\theta}|\theta_i)=\int_{\sum_{j\ne i}\theta_j=n\tilde{\theta}-\theta_i}p(\theta_i,\theta_{-i})/p_i(\theta_i)ds_{-i}.$$ Note that $\bar{\sigma}$ is a linear function with respect to $\sigma_1,\dots,\sigma_n$, denoted by $H(\sigma_1,\dots,\sigma_n)$.

{
\begin{remark}
The aggregation function \eqref{eq_aggre} can be regarded as the average of the players' strategies and $\tilde{\theta}$ is the average type. When the model is deterministic, which means that $|\boldsymbol\Theta|=\{\boldsymbol\theta_0\}$ and $P(\boldsymbol\theta_0)=1$, the aggregation function \eqref{eq_aggre} turns to the simple average of players' strategies, as \cite{koshal2016,xu2022}.
\end{remark}}

With the above aggregation function, the goal of player $i$ is to minimize the following conditional expectation of $f_i$ 
\begin{equation*}\begin{aligned}
U_i(\sigma_i,\bar{\sigma},\theta_i)=&\int_{\underline{\theta}}^{\overline{\theta}}f_i(\sigma_i(\theta_i),\bar{\sigma}(\tilde{\theta}),\theta_i)\bar{p}(\tilde{\theta}|\theta_i)d\tilde{\theta}\\=&\int_{\Theta_{-i}}f_i(\sigma_i(\theta_i),H(\sigma_i(\theta_i),\sigma_{-i}(\theta_{-i})),\theta_i)p_i(\theta_{-i}|\theta_i)d\theta_{-i}.
\end{aligned}\end{equation*}
Consequently, we can also regard $U_i(\sigma_i,\bar{\sigma},\theta_i)$ as $J_i(\sigma_i,\sigma_{-i},\theta_i)$. Denote its gradient by
$$F_i(\sigma_i,\bar{\sigma},\theta_i)=\nabla_i J_i(\sigma_i,\sigma_{-i},\theta_i).$$
Then we give the concept of Bayesian Nash equilibrium.
\begin{definition}
A strategy profile $(\sigma_1^*,\dots,\sigma_n^*)$ is a Bayesian Nash equilibrium (BNE) if for any $\sigma_i\in\Sigma_i$, $i\in I$,
$$U_i(\sigma_i^*,\sigma_{-i}^*,\theta_i)\leq U_i(\sigma_i,\sigma_{-i}^*,\theta_i),\ \mathrm{for\ a.e.\ }\theta_i\in\Theta_i.$$
\end{definition}

We make the following assumptions for the aggregative game $G$.
\begin{assumption}\label{ass_game}
Consider the incomplete-information aggregative game $G$. For $i\in I$,
\begin{enumerate}\renewcommand\labelenumi{(\roman{enumi})}
\item the action set $X_i$ is nonempty, convex, and compact;
\item the distribution $P$ is atomless, i.e., $P(\boldsymbol{\theta}=\boldsymbol{\zeta})=0$ for any given $\boldsymbol{\zeta}\in\boldsymbol{\Theta}$. Moreover, the measure $\mu(\{\theta_i\in\Theta_i|p_i(\theta_i)>0\})=\mu(\Theta_i)$;
\item the cost function $f_i(x_i,\tilde{x},\theta_i)$ is strictly convex in $x_i\in X_i$ and $L_\theta$-Lipschitz continuous in $\theta_i\in\Theta_i$ for each $x_i,\tilde{x}\in\mathbb{R}^m$;
\item the expectation $U_i$ is well defined for every $\sigma_j\in\Sigma_j$ and $\theta_i\in\Theta_i$, $j\in I$, and its gradient $F_i$ is $D$-Lipschitz continuous in $x_i\in X_i$ for any $\sigma_{-i}\in\Sigma_{-i}$ and $\theta_i\in\Theta_i$, and is $L_u$-Lipschitz continuous in $\bar{\sigma}$ for any $\sigma_i\in\Sigma_i$ and $\theta_i\in\Theta_i$.
\end{enumerate}
\end{assumption}
Assumption \ref{ass_game} was widely used in the study of aggregative games and Bayesian games \cite{guos2021,guow2021,koshal2016,milgrom1985}. The atomless property in Assumption \ref{ass_game}(ii) is a common assumption in Bayesian games \cite{guos2021,meirowitz2003,milgrom1985}, and the measure condition can be guaranteed by removing types in $\{\theta_i\in\Theta_i,\exists\varepsilon>0,p_i(\theta_i')>0,\forall\theta_i'\in B(\theta_i,\varepsilon)\}$.

Players in our model have local interactions with each other over time to estimate the aggregate, where these interactions are modeled by time-varying graphs $\mathcal{G}(t)$. At time $t$, players exchange their estimations of the aggregate with current neighbors through $\mathcal{G}(t)$, which satisfies the following assumption.

\begin{assumption}\label{ass_graph}
The graph sequence $\mathcal{G}(t)$ is uniformly jointly strongly connected, i.e., there exists an integer $\mathcal{B}>0$ such that $\cup_{k=t}^{t+\mathcal{B}} \mathcal{G}(k)$ is strongly connected, and its adjacency matrix $W(t)$ satisfies $[W(t)]_{ij}>\eta$, $\eta>0$, and $\sum_{i=1}^n [W(t)]_{ij}=\sum_{j=1}^n [W(t)]_{ij}=1$.
\end{assumption}
 
Assumption \ref{ass_graph} holds for a variety of networks and ensures the connectivity, which was also used in \cite{koshal2016}. With Assumption \ref{ass_graph}, we have the following result \cite{koshal2016,nedic2010}.

\begin{lemma}\label{lem_consensus}
Denote the transition matrices $\Phi(k,s)$ from time $s$ to $k>s$ as $\Phi(k,s)=W(k)W(k-1)\cdots W(s)$ for $0\leq s<k$. Under Assumption \ref{ass_game}(v),
\begin{enumerate}\renewcommand\labelenumi{(\alph{enumi})}
\item $\lim_{k\to\infty}\Phi(k,s)=\frac{1}{n}1 1^T$ for all $s\geq0$.
\item $|[\Phi(k,s)]_{ij}-1/n|\leq \Gamma \beta^{k-s}$ for all $k\geq s\geq0$ and $i,j\in I$, where $\Gamma=(1-\eta/(4N^2))^{1/\mathcal{B}}$ and $\beta=(1-\eta/(4N^2))^{1/\mathcal{B}}$.
\end{enumerate}
\end{lemma}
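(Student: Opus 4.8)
The plan is to follow the standard analysis of backward products of doubly stochastic matrices over jointly connected time-varying graphs (see \cite{koshal2016,nedic2010}), extracting the explicit geometric rate; the property actually used here is the connectivity condition (Assumption~\ref{ass_graph}). First I would record the structure of the products: each $W(t)$ is doubly stochastic, with the entries corresponding to edges of $\mathcal{G}(t)$ and all diagonal entries bounded below by $\eta$. Since a product of doubly stochastic matrices is doubly stochastic, every $\Phi(k,s)$ is doubly stochastic; in particular $\Phi(k,s)\mathbf{1}=\mathbf{1}$, $\mathbf{1}^T\Phi(k,s)=\mathbf{1}^T$, and each column of $\Phi(k,s)$ has average $1/n$. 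Consequently (a) is an immediate consequence of (b)---once $[\Phi(k,s)]_{ij}\to 1/n$ entrywise, double stochasticity forces $\Phi(k,s)\to\frac{1}{n}\mathbf{1}\mathbf{1}^T$---so the substance is the geometric bound in (b).

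The core step is to quantify mixing using the joint connectivity. Since $\cup_{k=t}^{t+\mathcal{B}}\mathcal{G}(k)$ is strongly connected, an information-diffusion argument over a window $B_0$ of order $n\mathcal{B}$ shows that, for every $t$, every entry of $\Phi(t+B_0,t)$ is bounded below by a positive constant (roughly $\eta^{B_0}$): any node reaches any other along a path of at most $n-1$ edges, each edge supplied by one of the successive connectivity windows and padded by self-loops of weight $\geq\eta$ to fill out the window. A strictly positive stochastic matrix has Dobrushin ergodicity coefficient $\tau(A):=\tfrac{1}{2}\max_{i,i'}\sum_{\ell}|[A]_{i\ell}-[A]_{i'\ell}|$ strictly below $1$; a refined weight count, as in \cite{nedic2010}, upgrades this to $\tau(\Phi(t+B_0,t))\leq 1-\eta/(4n^2)$. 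Submultiplicativity $\tau(AB)\leq\tau(A)\tau(B)$, applied to a decomposition of the interval $[s,k]$ into $\lfloor(k-s)/B_0\rfloor$ whole windows followed by a leftover block (whose coefficient is at most $1$), then gives $\tau(\Phi(k,s))\leq(1-\eta/(4n^2))^{\lfloor(k-s)/B_0\rfloor}$.

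Finally I would translate the ergodicity coefficient into the entrywise estimate. Because the $j$-th column of $\Phi(k,s)$ averages to $1/n$, we have $|[\Phi(k,s)]_{ij}-1/n|\leq\max_{i'}|[\Phi(k,s)]_{ij}-[\Phi(k,s)]_{i'j}|\leq 2\tau(\Phi(k,s))$; combining with the previous step and absorbing the floor, the factor $2$, and the constant $B_0$ into $\Gamma$ and $\beta$ yields the geometric bound in (b), and then (a) follows by letting $k\to\infty$ with $s$ fixed and $i,j$ arbitrary.

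I expect the main obstacle to be the middle step: the combinatorial claim that a window of length on the order of $n\mathcal{B}$ suffices for every entry of the block product to become positive, and---more delicately---the sharpening of the crude positivity bound $\eta^{B_0}$ to the explicit contraction factor $1-\eta/(4n^2)$ appearing in the lemma, which requires the careful propagation-of-weights bookkeeping of \cite{nedic2010} rather than the naive product estimate. The first and last steps are routine consequences of double stochasticity and the standard calculus of the ergodicity coefficient.
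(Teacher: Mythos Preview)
Your sketch is correct and faithfully reproduces the standard argument from \cite{nedic2010,koshal2016}: double stochasticity of the products, uniform positivity of block products over a connectivity window of length of order $n\mathcal{B}$, contraction of the Dobrushin ergodicity coefficient with the sharpened factor $1-\eta/(4n^2)$, and the entrywise bound via column-average $1/n$. You also correctly identify that the operative hypothesis is the connectivity condition (Assumption~\ref{ass_graph}); the ``Assumption~\ref{ass_game}(v)'' in the statement is a typo in the paper, as is the $N$ in place of $n$ in the constants.

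There is, however, nothing to compare against: the paper does not prove this lemma. It is stated as a quotation of a known result with the citation \cite{koshal2016,nedic2010} immediately preceding the statement, and no proof is given. So your proposal is not merely ``the same approach as the paper'' --- it is the approach of the cited sources that the paper invokes. The one place where you are right to flag a potential gap is the sharpening from $\eta^{B_0}$ to $1-\eta/(4n^2)$: that step genuinely requires the propagation-of-weights argument in \cite{nedic2010} and is not derivable from the crude positivity bound alone, but since the paper simply imports the lemma, this is a matter for those references rather than for the present paper.
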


The existence of the BNE can be guaranteed by the variational inequalities \cite{guow2021,ui2016}, summarized as follows.
\begin{lemma}
Under Assumption \ref{ass_game}, there exists a unique BNE of game $G$.
\end{lemma}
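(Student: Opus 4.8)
The plan is to recast the BNE as the solution of a variational inequality (VI) on the product Hilbert space $\mathcal{H}=\mathcal{H}_1\times\cdots\times\mathcal{H}_n$ and apply the standard existence-and-uniqueness theory for monotone VIs, as in \cite{ui2016,guow2021}. Write $\boldsymbol\sigma=(\sigma_1,\dots,\sigma_n)$ for a strategy profile, put $\Sigma=\Sigma_1\times\cdots\times\Sigma_n$, and define the pseudo-gradient operator $F:\Sigma\to\mathcal{H}$ by $F(\boldsymbol\sigma)=col\big(F_1(\sigma_1,\bar\sigma,\cdot),\dots,F_n(\sigma_n,\bar\sigma,\cdot)\big)$ with $\bar\sigma=H(\boldsymbol\sigma)$, each component $F_i=\nabla_iJ_i$ already accounting for both the direct dependence on $\sigma_i$ and the dependence through the linear aggregation map $H$. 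The first step is to show that $\boldsymbol\sigma^*$ is a BNE if and only if it solves the VI $\langle F(\boldsymbol\sigma^*),\boldsymbol\sigma-\boldsymbol\sigma^*\rangle_{\mathcal{H}}\ge 0$ for all $\boldsymbol\sigma\in\Sigma$.

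For the equivalence, note that since $f_i$ is strictly convex in $x_i$ (Assumption \ref{ass_game}(iii)) and $H$ is linear, $U_i$ is convex in $\sigma_i$; hence, with $\sigma_{-i}^*$ and $\bar\sigma^*=H(\boldsymbol\sigma^*)$ fixed, the pointwise best-response condition in the definition of BNE is, by first-order optimality, equivalent to $\langle F_i(\sigma_i^*,\bar\sigma^*,\cdot),\sigma_i-\sigma_i^*\rangle_{\mathcal{H}_i}\ge 0$ for all $\sigma_i\in\Sigma_i$, and summing over $i\in I$ yields the stated VI. The one delicate point is the passage between the \emph{pointwise a.e.} optimality appearing in the definition of BNE and the \emph{integral} optimality encoded by the $\mathcal{H}_i$-inner product: because a strategy may be modified at each type independently, the weight $p_i$ is nonnegative, and, by the measure condition in Assumption \ref{ass_game}(ii), $p_i>0$ almost everywhere on $\Theta_i$, a measurable-selection argument applied to the Carath\'{e}odory map $\theta_i\mapsto\arg\min_{y\in X_i}U_i(\,\cdot\,,\sigma_{-i}^*,\theta_i)$ shows that the two notions coincide.

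Next I would check the hypotheses of the VI existence result. By Assumption \ref{ass_game}(i) each $X_i$ is nonempty, convex, and compact, so $\Sigma_i$ is nonempty, convex, closed, and bounded in $\mathcal{H}_i$ (indeed $\lVert\sigma_i\rVert_{\mathcal{H}_i}^2\le\sup_{y\in X_i}\lVert y\rVert^2<\infty$); therefore $\Sigma$ is a nonempty, convex, and, by reflexivity of $\mathcal{H}$, weakly compact subset of $\mathcal{H}$. By Assumption \ref{ass_game}(iv) and compactness of the action sets, $F$ is well defined from $\Sigma$ into $\mathcal{H}$; moreover $F_i$ is $D$-Lipschitz in $x_i$ and $L_u$-Lipschitz in $\bar\sigma$, and since $H$ is a bounded linear averaging operator, $\bar\sigma$ depends Lipschitz-continuously on $\boldsymbol\sigma$, so $F$ is Lipschitz continuous on $\Sigma$ and in particular hemicontinuous. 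Finally, using the strict convexity in Assumption \ref{ass_game}(iii) together with the aggregative coupling through $H$ one obtains strict monotonicity of $F$ on $\Sigma$. With $\Sigma$ nonempty, convex, and weakly compact and $F$ monotone and hemicontinuous, the Browder--Minty / Stampacchia theory (via Minty's lemma) yields a solution of the VI, while strict monotonicity forces it to be unique; by the equivalence established above, this unique solution is the unique BNE of $G$.

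The step I expect to be the main obstacle is proving the strict monotonicity of the game operator $F$: strict convexity of each $f_i$ in its \emph{own} action only controls the diagonal part of the pseudo-gradient, so for a game one has to exploit the specific aggregative structure -- the coupling through $H$ with weight $1/n$ -- together with the full strength of Assumptions \ref{ass_game}(iii)--(iv), which is precisely the type of sufficient condition provided by \cite{ui2016}. A secondary technical point is the measurable-selection argument identifying pointwise-a.e. with integral optimality in the infinite-dimensional strategy space, which is where the atomlessness and the measure condition of Assumption \ref{ass_game}(ii) come into play.
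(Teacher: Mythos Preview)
Your proposal is essentially the same route the paper takes: the paper does not give its own proof of this lemma but simply states it as a consequence of the variational-inequality characterizations in \cite{ui2016,guow2021}, and your sketch is precisely an elaboration of that VI argument. You have also correctly flagged the one genuinely nontrivial step---strict monotonicity of the pseudo-gradient $F$, which does not follow from strict convexity in each player's own action alone and is exactly where the structural sufficient conditions of \cite{ui2016} are invoked.
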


Based on the existence, our goal is to compute the BNE of the proposed model, summarized as follows.
\begin{problem}
Seek the BNE of the incomplete-information aggregative game $G=(I,\{X_i\}_{i\in I},\boldsymbol{\Theta},P,\{f_i\}_{i\in I})$ in a distributed manner.
\end{problem}

In Bayesian games, the continuity of types poses barriers to seeking a BNE. As the Riesz's Lemma shows \cite{rynne2007}, any infinite-dimensional normed space contains a sequence of unit vectors $\{x_n\}$ with $\lVert x_n-x_m\rVert>\alpha$ for any $0<\alpha<1$ and $n\ne m$. Then the strategy set $\boldsymbol\Sigma$ lying in the infinite-dimensional space $\mathcal{H}_1\times\cdots\times\mathcal{H}_n$, is not compact, which poses obstacles in computation. There are a few attempts to seek a continuous-type BNE. For example, \cite{guow2021} considered the situation that the strategy forms are prior knowledge, in which the forms are usually unavailable, while \cite{guos2021} utilized polynomial approximations to estimate a BNE without the estimation error. Moreover, \cite{huangl2021} adopted heuristic approximations in discrete-action Bayesian games, but their method was NP-hard and not practical to be implemented in continuous-action games. Thus, since directly seeking a BNE is hard, we introduce the following concept.
\begin{definition}
Denote $EU(\sigma_i,\bar{\sigma})=\int_{\Theta_i}U_i(\sigma_i,\bar{\sigma},\theta_i)p_i(\theta_i)d\theta_i.$ For any $\epsilon>0$, a strategy profile $\hat{\boldsymbol{\sigma}}^*=(\hat{\sigma}_1^*,\dots,\hat{\sigma}_n^*)$ is an $\epsilon$-Bayesian Nash equilibrium ($\epsilon$-BNE) of $G$ if for any $\sigma_{i}\in\Sigma_{i}$, $i\in I$,
$$EU(\sigma_i,H(\sigma_i,\hat{\sigma}_{-i}^*)\geq EU(\hat{\sigma}_i^*,H(\hat{\boldsymbol\sigma}^*)).$$
\end{definition}

To overcome the above bottlenecks in seeking BNE, we propose a discretization method in the following section to convert the infinite-dimensional problem to a finite-dimensional one.

\section{Discretization}\label{sec_dis}

In this section, we give a discretization method and show its effectiveness in approximating the best responses and BNE of the continuous-type model $G$.

For each player $i$, we select $N$ points $\theta_i^k$ from $\Theta_i$ satisfying
$$P_i(\theta_i^k)=\frac{k}{N},\ k\in [N].$$
Denote the corresponding discrete type set by $\hat{\Theta}_i$. Define $\theta_i^0=\underline{\theta}$ and $\hat{\boldsymbol{\Theta}}=\hat{\Theta}_1\times\cdots\times\hat{\Theta}_n$. In the discretized model, we regard all types in the interval $(\theta_i^{k-1},\theta_i^k]$ as $\theta_i^k$, then the discrete type $\boldsymbol{\theta}$ follow the below joint distribution
\begin{equation}\label{eq_dispoints}
\hat{P}(\boldsymbol{\theta})=\frac{1}{N^n},\ \boldsymbol{\theta}\in\hat{\boldsymbol{\Theta}}.
\end{equation}
Correspondingly, the marginal distribution $\hat{P}_i(\theta_i)=\sum_{\theta_i\in\hat{\Theta}_i}\hat{P}(\theta_i,\theta_{-i})$ and the conditional distribution $\hat{P}_i(\theta_{-i}|\theta_i)=\hat{P}(\theta_i,\theta_{-i})/\hat{P}_i(\theta_i).$

Due to Assumption \ref{ass_game}(ii) that $\mu(\{\theta_i\in\Theta_i|p_i(\theta_i)>0\})=\mu(\Theta_i)$, the gap between the adjacent discrete points $\theta_i^k-\theta_i^{k-1}$ tends to 0 as $N$ tends to infinity. Since we use $\theta_i^k$ to represent the interval $(\theta_i^k,\theta_i^{k-1}]$, we choose the length of such intervals as small as possible, which can effectively reduce the error. Additionally, our design is friendly to distributed algorithms, while other choices of discrete points will bring extra computation when players update the strategies.

Based on the above discretization, we formulate a discretized model as $\hat{G}=(I,\{X_i\}_{i\in I},$
$\hat{\Theta},\hat{P},\{f_i\}_{i=1}^{n})$. In this model, strategies are restricted to $N$-dimensional vectors. Denote the strategy set of player $i$ in $\hat{G}$ by $\hat{\Sigma}_i$. Thus, the aggregate of the discretized strategies is 
\begin{equation*}
\hat{\bar{\sigma}}(\tilde{\theta})=\sum_{\sum_{i=1}^n\theta_i=n\tilde{\theta},\theta_i\in\hat{\Theta}_i}\frac{\sum_{i=1}^n\hat{\sigma}_i(\theta_i)}{n}\hat{\bar{P}}(\theta_1,\dots,\theta_n|\tilde{\theta}),
\end{equation*}
where $\tilde{\theta}=\sum_{i=1}^n\theta_i/n$ follows the below discrete distribution
$$\hat{\bar{P}}(\tilde{\theta})=\sum_{\sum_{i=1}^n\theta_i=n\tilde{\theta}, \theta_i\in\hat{\Theta}_i}\frac{1}{N^n},$$
with the conditional probability $\hat{\bar{P}}(\cdot|\tilde{\theta})$.
Since the $\hat{\bar{P}}$ is a discrete distribution, the aggregation function can be written as $\hat{\bar{\sigma}}=\sum_{i=1}^n h_i(\hat{\sigma}_i)$, where $h_i$ is a linear function mapping from $\hat{\Sigma}_i$ to $\mathbb{R}^{nNm}$. Denote $h_{-i}(\hat{\sigma}_{-i})=\sum_{j\ne i}h_j(\hat{\sigma}_j)$ for $\hat{\sigma}_{-i}\in\hat{\Sigma}_{-i}$, then the expectation of the cost of $i$ in $\hat{G}$ is 
$$\hat{U}_i(\hat{\sigma}_i,\hat{\bar{\sigma}},\theta_i)=\sum_{\theta_{-i}\in\hat{\Theta}_{-i}}f_i(\hat{\sigma}_i(\theta_i),h_i(\hat{\sigma}_i(\theta_i))+h_{-i}(\hat{\sigma}_{-i}(\theta_{-i})),\theta_i)\hat{P}_i(\theta_{-i}|\theta_i),\ \theta_i\in\hat{\Theta}_i.$$
Similar to the continuous-type model $G$, $\hat{U}_i$ can also be regarded as $\hat{J}_i(\hat{\sigma}_i,\hat{\sigma}_{-i},\theta_i)$. Denote the best response strategy of player $i$ with respect to $\hat{\sigma}_{-i}\in\hat{\Sigma}_{-i}$ in $\hat{G}$ by $\hat{\sigma}_{i*}^N$, which satisfies for any $\theta_i\in\hat{\Theta}_i$,
$$\hat{\sigma}_{i*}^N=\arg\min_{\hat{\sigma}(\theta_i)\in X_i}\hat{J}_i(\sigma_i,\hat{\sigma}_{-i},\theta_i).$$ Denote the set of best responses by $BR_i^N(\hat{\sigma}_{-i})$ in $\hat{G}$. Then we define the following best response and equilibrium of $\hat{G}$.
\begin{definition}
A strategy pair $(\hat{\sigma}_1^*,\dots,\hat{\sigma}_n^*)$ is a BNE of $\hat{G}$, or a DBNE($N$) of $\hat{G}$ if 
$$\hat{\sigma}_i^*\in BR^N_i(\hat{\sigma}_{-i}^*).$$
\end{definition}
The existence of DBNE can be guaranteed by variational inequalities \cite{guow2021,ui2016} or Browner fixed point theorem \cite{guos2021,milgrom1985}, summarized as follows.
\begin{lemma}
Under Assumption \ref{ass_game}, there exists a unique DBNE($N$) of the discretized model $\hat{G}$.
\end{lemma}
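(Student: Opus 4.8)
The plan is to recast the search for a DBNE($N$) as a finite-dimensional variational inequality (VI) over a compact convex set and then invoke the standard existence-and-uniqueness theory for (strictly) monotone VIs. In $\hat G$ each player's strategy is an $N$-tuple of actions, one per discrete type, so the joint strategy space is $\hat{\boldsymbol\Sigma}=\hat\Sigma_1\times\cdots\times\hat\Sigma_n$ with $\hat\Sigma_i=X_i^{N}$. Stacking the per-player pseudo-gradients into $\hat F(\hat{\boldsymbol\sigma})=\mathrm{col}(\nabla_1\hat J_1,\dots,\nabla_n\hat J_n)$, the profile $\hat{\boldsymbol\sigma}^{*}$ is a DBNE($N$) if and only if it solves $\mathrm{VI}(\hat{\boldsymbol\Sigma},\hat F)$, namely $\langle\hat F(\hat{\boldsymbol\sigma}^{*}),\,\hat{\boldsymbol\sigma}-\hat{\boldsymbol\sigma}^{*}\rangle\ge 0$ for all $\hat{\boldsymbol\sigma}\in\hat{\boldsymbol\Sigma}$. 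This equivalence rests on the convexity, in $\hat\sigma_i$, of the objective that player $i$ effectively minimizes: because $\hat U_i(\cdot,\hat{\bar\sigma},\theta_i)$ involves only the block $\hat\sigma_i(\theta_i)$, block-wise minimization over the discrete types and joint minimization over the vector $\hat\sigma_i$ coincide, and the resulting objective is a nonnegatively weighted, coordinate-block-separable sum of maps that are strictly convex in the relevant action block by Assumption \ref{ass_game}(iii) together with the linearity of the aggregation maps $h_i$; the first-order optimality conditions then yield the VI.

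For existence, I would check the hypotheses of the standard VI existence theorem. The set $\hat{\boldsymbol\Sigma}$ is nonempty, convex and compact since each $X_i$ is, by Assumption \ref{ass_game}(i), and finite products preserve these properties; and $\hat F$ is continuous on $\hat{\boldsymbol\Sigma}$ because, by Assumption \ref{ass_game}(iv), $F_i$ is well defined and Lipschitz in its arguments, and $\nabla_i\hat J_i$ is a finite weighted sum of such gradients composed with the affine maps $h_i,h_{-i}$. A continuous map on a nonempty compact convex set admits a solution of the associated VI; equivalently, $\hat{\boldsymbol\sigma}\mapsto\Pi_{\hat{\boldsymbol\Sigma}}(\hat{\boldsymbol\sigma}-\hat F(\hat{\boldsymbol\sigma}))$ is a continuous self-map of $\hat{\boldsymbol\Sigma}$ and has a fixed point by Brouwer. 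This reproduces the existence part already attributed to \cite{guos2021,guow2021,milgrom1985,ui2016}.

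For uniqueness it suffices to prove that $\hat F$ is strictly monotone on $\hat{\boldsymbol\Sigma}$: if $\hat{\boldsymbol\sigma}^{*}$ and $\hat{\boldsymbol\sigma}^{**}$ both solved $\mathrm{VI}(\hat{\boldsymbol\Sigma},\hat F)$, then adding the two defining inequalities, each tested at the other solution, gives $\langle\hat F(\hat{\boldsymbol\sigma}^{*})-\hat F(\hat{\boldsymbol\sigma}^{**}),\,\hat{\boldsymbol\sigma}^{*}-\hat{\boldsymbol\sigma}^{**}\rangle\le 0$, whence $\hat{\boldsymbol\sigma}^{*}=\hat{\boldsymbol\sigma}^{**}$. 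I expect this to be the main obstacle, because strict convexity of $f_i$ in $x_i$ alone delivers strict monotonicity only of the diagonal blocks $\hat\sigma_i(\theta_i)\mapsto\nabla_{x_i}f_i$, whereas the off-diagonal terms created by the common aggregate must be controlled. The route I would take is to split $\hat F$ into a strictly monotone own-action part plus an aggregate-coupling part, bound the inner product of the increment of the coupling part against $\hat{\boldsymbol\sigma}^{*}-\hat{\boldsymbol\sigma}^{**}$ by a constant built from the $D$- and $L_u$-Lipschitz bounds of Assumption \ref{ass_game}(iv) times the corresponding aggregate increment, and absorb it into the strictly positive own-action term, mirroring the proof of the continuous-type uniqueness lemma for $G$. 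One also has to verify that discretization does not erase the monotonicity margin; this holds because $\hat P$ and its conditionals $\hat P_i(\theta_{-i}\mid\theta_i)=1/N^{\,n-1}$ are strictly positive on the discrete support, so all weights entering $\hat J_i$ are bounded away from zero.

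Finally, the cleanest implementation is likely to observe that $\hat G$ is nothing but the game $G$ with $P$ replaced by the finitely supported distribution $\hat P$ of \eqref{eq_dispoints}, whose conditionals are strictly positive on their support; the VI argument proving the earlier lemma ``there exists a unique BNE of $G$'' then applies verbatim, now in the finite-dimensional space $\mathbb{R}^{Nm}\times\cdots\times\mathbb{R}^{Nm}$, where compactness is automatic from Assumption \ref{ass_game}(i) and the monotonicity estimate only simplifies.
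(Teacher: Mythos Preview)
The paper does not actually supply a proof of this lemma: the sentence immediately preceding it says only that existence ``can be guaranteed by variational inequalities \cite{guow2021,ui2016} or Brouwer fixed point theorem \cite{guos2021,milgrom1985}'', and the lemma is stated as a summary of those cited results. Your proposal therefore goes well beyond what the paper itself records, and its overall architecture---VI reformulation, existence via Brouwer on a compact convex product of the $X_i^{N}$, uniqueness via strict monotonicity of the stacked pseudo-gradient---is precisely the approach the paper defers to.

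There is, however, a genuine gap in your uniqueness argument. You plan to split $\hat F$ into a strictly monotone own-action part and an aggregate-coupling part, bound the latter using the $D$- and $L_u$-Lipschitz constants of Assumption~\ref{ass_game}(iv), and then ``absorb it into the strictly positive own-action term''. That absorption step requires a quantitative lower bound on $\langle \hat F(\hat{\boldsymbol\sigma})-\hat F(\hat{\boldsymbol\sigma}'),\hat{\boldsymbol\sigma}-\hat{\boldsymbol\sigma}'\rangle$ in terms of $\lVert\hat{\boldsymbol\sigma}-\hat{\boldsymbol\sigma}'\rVert^2$, i.e.\ strong monotonicity, whereas Assumption~\ref{ass_game}(iii) gives only \emph{strict} convexity of $f_i$ in $x_i$. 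Strict convexity yields no modulus to dominate a Lipschitz coupling term, so the inequality you need cannot be obtained from the stated hypotheses by that route. The paper is equally silent on how the coupling through $\bar\sigma$ is controlled (its later proof of Theorem~\ref{thm_conv} simply asserts that strict convexity of $f_i$ makes $E\boldsymbol F$ strictly monotone), so this is a point where both your proposal and the paper lean on the cited references rather than on a self-contained argument. If you want a complete proof under the assumptions as written, you would either need to strengthen (iii) to strong convexity with a modulus large relative to $L_u$, or argue strict monotonicity of the full map $\hat{\boldsymbol\sigma}\mapsto\hat F(\hat{\boldsymbol\sigma})$ directly from the joint structure of $\hat J_i(\hat\sigma_i,\hat\sigma_{-i},\theta_i)$ rather than by a diagonal-plus-perturbation decomposition.
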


To approximate the strategies in the continuous-type model $G$ with the strategies in the discretized model $\hat{G}$, we extend the domains of strategies from $\hat{\Theta}_i$ to $\Theta_i$, and define the strategies of $\hat{G}$ at type $\theta_i\in(\theta_i^{k-1},\theta_i^k]$ as
$$\hat{\sigma}_i(\theta_i)=\hat{\sigma}_i(\theta_i^k).$$
Denote $\hat{\tilde{\Theta}}=\{\tilde{\theta}|n\tilde{\theta}=\sum_{i=1}^n\theta_i,\theta_i\in\Theta_i\}=\{\tilde{\theta}^1,\dots,\tilde{\theta}^S\}$, $S\leq nN$, and $\tilde{\theta}^0=\underline{\theta}$. For $\hat{\sigma}_i\in\Sigma_i$, $i\in I$, their aggregates in the discretized model $\hat{G}$ and the continuous-type model $G$ satisfy
$$\bar{\sigma}(\tilde{\theta})=\hat{\bar{\sigma}}(\tilde{\theta}^k),\ \tilde{\theta}\in(\tilde{\theta}^{k-1},\tilde{\theta}^k].$$ 
With this extension, we denote $\hat{\bar{\sigma}}$ of the discretized strategies as $\bar{\sigma}$ for convenience.

Then we estimate the best response in $G$. Actually, a best response $\hat{\sigma}_{i*}$ needs to respond to any strategies in $\Sigma_{-i}$, rather than strategies in $\hat{\Sigma}_{-i}\subseteq\Sigma_{-i}$. To this end, we modify the best responses $\hat{\sigma}_{i*}$ with respect to $\sigma_{-i}\in\Sigma_{-i}$ as follows. For $\theta_i^k\in\hat{\Theta}_i$,
$$\hat{\sigma}_{i*}(\theta_i^k)=\arg\min_{\hat{\sigma}_i(\theta_i)\in X_i}\int_{\Theta_{-i}} f_i(\hat{\sigma}(\theta_i^k),H(\hat{\sigma}_i(\theta_i^k),\sigma_{-i}(\theta_{-i})),\theta_i^k)\frac{\int_{\theta_i^{k-1}}^{\theta_i^k}p(\theta_i,\theta_{-i})d\theta_i}{\hat{P}_i(\theta_i^k)}d\theta_{-i}.$$

The following lemma shows the relation between the best response in the discretized model $\hat{G}$ and in the continuous-type model $G$.

\begin{lemma}\label{lem_br}
For a given $\sigma_{-i}\in\Sigma_{-i}$, if all the best responses in $BR_i^N(\sigma_{-i})$ of $G$ are piecewise continuous, then the best responses in $BR_i^N(\sigma_{-i})$ in $\hat{G}$ are almost surely the best responses of $G$, as $N$ tends to infinity. Specifically, for any $\hat{\sigma}_{i*}^N\in BR_i^N(\sigma_{-i})$, there exists $\sigma_{l*}\in BR_i(\sigma_{-i})$ such that
$$\lim_{N\to\infty}\hat{\sigma}_{i*}^N(\theta_i)=\sigma_{i*}(\theta_i),\ \mathrm{for\ a.e.\ }\theta_i\in\Theta_i.$$
\end{lemma}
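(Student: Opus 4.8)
The plan is to fix $\sigma_{-i}\in\Sigma_{-i}$ and to show that, for almost every fixed $\theta_i$, the finite-dimensional optimization defining $\hat{\sigma}_{i*}^N(\theta_i)$ converges to the one defining the continuous-model best response $\sigma_{i*}(\theta_i)$, and then to conclude by stability of minimizers. For $\theta_i\in(\theta_i^{k-1},\theta_i^k]$ write
$$\Psi_N(x;\theta_i)=\int_{\Theta_{-i}}f_i\bigl(x,H(x,\sigma_{-i}(\theta_{-i})),\theta_i^k\bigr)\,w_N^k(\theta_{-i})\,d\theta_{-i},\qquad w_N^k(\theta_{-i}):=\frac{\int_{\theta_i^{k-1}}^{\theta_i^k}p(t,\theta_{-i})\,dt}{\int_{\theta_i^{k-1}}^{\theta_i^k}p_i(t)\,dt},$$
so that $\hat{\sigma}_{i*}^N(\theta_i)=\arg\min_{x\in X_i}\Psi_N(x;\theta_i)$ (using $\hat{P}_i(\theta_i^k)=P_i(\theta_i^k)-P_i(\theta_i^{k-1})$), and let $\Psi(x;\theta_i)=\int_{\Theta_{-i}}f_i(x,H(x,\sigma_{-i}(\theta_{-i})),\theta_i)\,p_i(\theta_{-i}|\theta_i)\,d\theta_{-i}$, so that $\sigma_{i*}(\theta_i)=\arg\min_{x\in X_i}\Psi(x;\theta_i)$. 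Both $w_N^k(\cdot)$ and $p_i(\cdot|\theta_i)$ are probability densities on $\Theta_{-i}$ (integrate the numerator of $w_N^k$ over $\theta_{-i}$ and use Fubini, together with $p_i>0$ a.e.\ by Assumption \ref{ass_game}(ii)), and the integrand $f_i$ is bounded, say by $M$, over the compact range of its arguments, since the $X_j$ are compact (Assumption \ref{ass_game}(i)) and $H$ is linear, hence continuous. The key point is that the only $N$-dependence of $\Psi_N$ relative to $\Psi$ lies in the label $\theta_i^k$ in the last slot and in the weight $w_N^k$; the aggregate slot $H(x,\sigma_{-i}(\theta_{-i}))$ is untouched.

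The first, and principal, step is to prove that for a.e.\ $\theta_i$ one has $w_N^{k(N,\theta_i)}\to p_i(\cdot|\theta_i)$ in $L^1(\Theta_{-i})$, where $k(N,\theta_i)$ indexes the interval containing $\theta_i$. The mesh $\max_k(\theta_i^k-\theta_i^{k-1})$ tends to $0$: Assumption \ref{ass_game}(ii) gives $p_i>0$ a.e., so $P_i$ is strictly increasing and its quantile map $P_i^{-1}$ is continuous on the compact $[0,1]$, hence uniformly continuous. Dividing numerator and denominator of $w_N^k$ by $\theta_i^k-\theta_i^{k-1}$ and applying the Lebesgue differentiation theorem to $p(\cdot,\theta_{-i})\in L^1(\Theta_i)$ and to $p_i\in L^1(\Theta_i)$ --- valid for intervals that contain $\theta_i$ and shrink to it --- gives $w_N^k(\theta_{-i})\to p(\theta_i,\theta_{-i})/p_i(\theta_i)=p_i(\theta_{-i}|\theta_i)$ at every common Lebesgue point (where Assumption \ref{ass_game}(ii) ensures $p_i(\theta_i)>0$). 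A Fubini argument shows the set of bad pairs $(\theta_i,\theta_{-i})$ is Lebesgue-null, so for a.e.\ $\theta_i$ this convergence holds for a.e.\ $\theta_{-i}$; since the $w_N^k$ and $p_i(\cdot|\theta_i)$ are probability densities, Scheff\'e's lemma then upgrades this to $L^1(\Theta_{-i})$ convergence.

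Granting this, for a.e.\ $\theta_i$ I would estimate
$$\sup_{x\in X_i}\bigl|\Psi_N(x;\theta_i)-\Psi(x;\theta_i)\bigr|\;\le\;L_\theta\,|\theta_i^k-\theta_i|\;+\;M\,\|w_N^k-p_i(\cdot|\theta_i)\|_{L^1(\Theta_{-i})}\;\longrightarrow\;0,$$
splitting the difference of the two integrals into (i) the change from replacing $\theta_i^k$ by $\theta_i$ in $f_i$, bounded by the $L_\theta$-Lipschitz continuity of $f_i$ in $\theta_i$ (Assumption \ref{ass_game}(iii)) together with the mesh bound, and (ii) the change of the weight, bounded by $|f_i|\le M$ and the $L^1$ convergence above. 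Since $X_i$ is compact and $\Psi(\cdot;\theta_i)$ is continuous and convex with minimizer $\sigma_{i*}(\theta_i)$ --- unique by the strict convexity in Assumption \ref{ass_game}(iii), and when it is not, selected by the piecewise-continuity hypothesis on $BR_i(\sigma_{-i})$ --- the standard stability of $\arg\min$ under uniform convergence applies: any limit $\bar{x}$ of a subsequence of $\hat{\sigma}_{i*}^N(\theta_i)$ satisfies $\Psi(\bar{x};\theta_i)\le\Psi(y;\theta_i)$ for all $y\in X_i$ by passing to the limit in $\Psi_N(\hat{\sigma}_{i*}^N(\theta_i);\theta_i)\le\Psi_N(y;\theta_i)$, hence $\bar{x}=\sigma_{i*}(\theta_i)$, and compactness of $X_i$ forces $\hat{\sigma}_{i*}^N(\theta_i)\to\sigma_{i*}(\theta_i)$. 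As this holds for a.e.\ $\theta_i\in\Theta_i$, the lemma follows.

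The step I expect to be the genuine obstacle is the $L^1$ convergence $w_N^k\to p_i(\cdot|\theta_i)$: the pointwise convergence furnished by the Lebesgue differentiation theorem does not by itself let one pass to the limit inside $\int_{\Theta_{-i}}$, because the averages $w_N^k$ need not be uniformly dominated and $f_i(x,H(x,\sigma_{-i}(\cdot)),\theta_i)$ is merely measurable in $\theta_{-i}$; the argument therefore rests on Scheff\'e's lemma, which in turn needs the careful Fubini bookkeeping that makes the nested ``for a.e.\ $\theta_i$, then for a.e.\ $\theta_{-i}$'' quantifiers rigorous, and on Assumption \ref{ass_game}(ii) to keep the marginal density $p_i$ bounded away from $0$.
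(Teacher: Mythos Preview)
Your argument is correct and in fact more carefully justified than the paper's. Both proofs follow the same two-stage outline---first show that the discretized conditional weights converge to the true conditional density, then deduce convergence of the associated minimizers---but the implementations differ in substance.

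For the weight convergence, the paper simply invokes L'Hospital's rule to obtain $\frac{\int_{\theta_i^{k-1}}^{\theta_i^k}p(t,\theta_{-i})\,dt}{\int_{\theta_i^{k-1}}^{\theta_i^k}p_i(t)\,dt}\to p_i(\theta_{-i}|\theta_i^k)$, which tacitly assumes more regularity on $p$ than is stated; your use of the Lebesgue differentiation theorem is the correct tool under mere integrability. More importantly, the paper passes directly from this pointwise limit to convergence of the minimizers at the grid points, without explaining why $\int_{\Theta_{-i}} f_i(\cdots)w_N^k(\theta_{-i})\,d\theta_{-i}$ converges. Your Scheff\'e step (pointwise a.e.\ convergence of probability densities implies $L^1$ convergence) fills exactly this gap and is what makes the uniform estimate $\sup_{x\in X_i}|\Psi_N(x;\theta_i)-\Psi(x;\theta_i)|\to0$ honest; the obstacle you flagged at the end is real, and your resolution of it is sound.

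The paper also uses the piecewise-continuity hypothesis differently from you. Having asserted convergence only at the moving grid nodes $\theta_i^k$, it then invokes piecewise continuity of $\sigma_{i*}$ to control $|\hat{\sigma}_{i*}^N(\theta_i)-\sigma_{i*}(\theta_i)|$ for $\theta_i\in(\theta_i^{k-1},\theta_i^k]$ by $|\sigma_{i*}(\theta_i^k)-\sigma_{i*}(\theta_i)|$, and packages this as an $L^2(\Theta_i)$ bound of order $\max_k(\theta_i^k-\theta_i^{k-1})$. You work directly at each fixed $\theta_i$, so the piecewise-continuity hypothesis becomes largely redundant in your route (you invoke it only as a tiebreaker when the minimizer might not be unique). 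Your approach buys a cleaner pointwise-a.e.\ conclusion with fewer unverified leaps; the paper's approach, once its gaps are filled, buys an explicit $L^2$ rate tied to the mesh size.
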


\begin{proof}
Due to the L'Hospital's rule,
$$\lim_{N\to\infty}\frac{\int_{\theta_i^{r-1}}^{\theta_i^k}p(\theta_i,\theta_{-i})}{\int_{\theta_i^{r-1}}^{\theta_i^k}p_i(\theta_i)d\theta_i}=\frac{p(\theta_i^k,\theta_{-i})}{p_i(\theta_i^k)}=p_i(\theta_{-i}|\theta_i^k).$$
Thus, for any $\hat{\sigma}_{i*}\in BR_i^N(\sigma_{-i})$ and $\theta_i\in\hat{\Theta}_i$, there exists a strategy $\sigma_{i*}\in BR_i(\sigma_{-i})$ such that, as $N$ tends to infinity, $\hat{\sigma}_{i*}^N(\theta_i)\to \sigma_{i*}$. Since $\sigma_{i*}$ is piecewise continuous, for any $\varepsilon>0$, there exists $\delta>0$ such that, for any $\theta_i\in\Theta_i$ except for finite points and $\theta_i'\in B(\theta_i,\delta)\cap \Theta_i$, $|\sigma_{i*}(\theta_i)-\sigma_{i*}(\theta_i')|<\varepsilon$. Take $\varepsilon=\max_k\{\theta_i^k-\theta_i^{k-1}\}$, thus
$$\int_{\Theta_i}|\hat{\sigma}_{i*}^N(\theta_i)-\sigma_{i*}(\theta_i)|^2d\theta_i\leq\sum_{i=1}^N\varepsilon^2(\theta_i^k-\theta_i^{k-1})=\varepsilon^2(\overline{\theta}-\underline{\theta}).$$
As $N$ tends to infinity, $\varepsilon$ tends to 0 and thus, $\hat{\sigma}_{i*}^N$ is $\sigma_{i*}$ for almost every $\theta_i\in\Theta_i$.
\end{proof}

Lemma \ref{lem_br} implies that players can utilize the best responses derived from $\hat{G}$ to estimate the best responses in $G$. That is to say, players are willing to adopt the best responses in $\hat{G}$, and thus these best responses form a DBNE. Then we give the relation between the derived DBNE of $\hat{G}$ and the BNE of $G$. 

In the next theorem, we show that the DBNE($N$) of $\hat{G}$ is an $\epsilon$-BNE of $G$. In addition, we provides an explicit error bound of our approximation, compared with heuristic approximations \cite{guos2021,guow2021,huangl2021}.

\begin{theorem}\label{thm_bne}
Let $(\sigma_1^*,\dots,\sigma_n^*)$ be the BNE of the continuous-type model $G$. Under Assumption \ref{ass_game}(i), (ii), and (iv), the DBNE $(\hat{\sigma}_1^*,\dots,\hat{\sigma}_n^*)$ of the discretized model $\hat{G}$ is an $\epsilon$-BNE, where $\epsilon=O(\max_{i\in I}\max_{k\in[N]}(\theta_i^k-\theta_i^{k-1}))$.
\end{theorem}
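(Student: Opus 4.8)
The plan is to verify the $\epsilon$-BNE condition player by player, in the form that no unilateral deviation lowers the expected cost by more than $\epsilon$: fix $i\in I$ and an arbitrary $\sigma_i\in\Sigma_i$, and bound $EU(\hat\sigma_i^*,H(\hat{\boldsymbol\sigma}^*))-EU(\sigma_i,H(\sigma_i,\hat\sigma_{-i}^*))$ from above by $\epsilon=O(\max_{i\in I}\max_{k\in[N]}(\theta_i^k-\theta_i^{k-1}))$. Both quantities carry the \emph{same} opponent profile $\hat\sigma_{-i}^*$, extended to $\Theta_{-i}$ as the piecewise-constant function on the product of the cells $(\theta_j^{k-1},\theta_j^k]$, so I would first unfold each $EU$ into a single integral over $\boldsymbol\Theta$ against $p$ via the conditional form of $U_i$, and then split $\Theta_i$ into the $N$ cells $I_k:=(\theta_i^{k-1},\theta_i^k]$, each of which carries marginal mass $P_i(I_k)=k/N-(k-1)/N=1/N$ by the very choice of the discretization points.

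On each cell $I_k$ I would make three passes. (1) Freeze the type argument of $f_i$ at the grid point $\theta_i^k$ in both integrands; by the Lipschitz continuity of $f_i$ in the type this perturbs the cell contributions by at most a term of order $(\theta_i^k-\theta_i^{k-1})/N$. (2) On $I_k$ the DBNE strategy is the constant $\hat\sigma_i^*(\theta_i^k)=\arg\min_{y\in X_i}\hat J_i(y,\hat\sigma_{-i}^*,\theta_i^k)$ by the definition of DBNE, so after Pass (1) the cell contribution of $\hat\sigma_i^*$ equals $\tfrac1N\min_{y\in X_i}\hat J_i(y,\hat\sigma_{-i}^*,\theta_i^k)$ modulo the discrepancy between the uniform conditional weights built into $\hat J_i$ and the conditional law of $\theta_{-i}$ that $p$ assigns on $I_k$. (3) For the arbitrary deviation $\sigma_i$, lower-bound its cell contribution pointwise in $\theta_i$ by the best-response value $V(\theta_i):=\min_{y\in X_i}\int_{\Theta_{-i}}f_i(y,H(y,\hat\sigma_{-i}^*(\theta_{-i})),\theta_i^k)\,p_i(\theta_{-i}|\theta_i)\,d\theta_{-i}$ of $G$ against $\hat\sigma_{-i}^*$ and integrate over $I_k$; this needs nothing beyond $f_i\ge\min f_i$. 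It then remains to match $\int_{I_k}V(\theta_i)p_i(\theta_i)\,d\theta_i$ with $\tfrac1N\min_{y\in X_i}\hat J_i(y,\hat\sigma_{-i}^*,\theta_i^k)$: both are minima over the compact set $X_i$ of the \emph{same} cost functional, the only difference being the conditional law of $\theta_{-i}$ (the true $p_i(\cdot|\theta_i)$ versus the uniform one used in $\hat G$). Since $f_i$ is continuous hence bounded on $X_i$, the two minima differ by at most a constant times the total-variation distance of these conditional laws restricted to $I_k$; by the atomless/absolute-continuity hypothesis in Assumption \ref{ass_game}(ii) this distance vanishes with the cell width, quantitatively at rate $O(\max_k(\theta_i^k-\theta_i^{k-1}))$, which is exactly the L'Hôpital estimate already used in the proof of Lemma \ref{lem_br}.

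Assembling the three passes, the difference of the two contributions on $I_k$ is at most a term of order $(\theta_i^k-\theta_i^{k-1})/N$ plus $\tfrac1N$ times an $O(\max_k(\theta_i^k-\theta_i^{k-1}))$ bridging error; summing over $k\in[N]$ and using $\sum_k(\theta_i^k-\theta_i^{k-1})=\overline\theta-\underline\theta$ gives $EU(\hat\sigma_i^*,H(\hat{\boldsymbol\sigma}^*))-EU(\sigma_i,H(\sigma_i,\hat\sigma_{-i}^*))=O(\max_{k\in[N]}(\theta_i^k-\theta_i^{k-1}))$, and taking the worst player yields $\epsilon$ of the claimed order. The main obstacle is Passes (2)–(3): transferring the exact optimality of the DBNE against the discretized objective $\hat J_i(\cdot,\hat\sigma_{-i}^*,\theta_i^k)$ to the genuine continuous contribution against an only measurable deviation $\sigma_i$ — one must simultaneously absorb the variation of $\sigma_i$ across a cell (handled by the pointwise best-response lower bound) and, more delicately, the mismatch between the uniform discrete conditional law and the true $p_i(\cdot|\theta_i)$ on each cell, which is where Assumption \ref{ass_game}(ii) and the regularity of $p$ produce the $O(\max_k(\theta_i^k-\theta_i^{k-1}))$ rate. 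The remaining steps — the Lipschitz substitutions of Pass (1) and the telescoping — are routine; and if one additionally knows the pertinent best responses are piecewise continuous, Lemma \ref{lem_br} can be invoked to shorten the bridging step.
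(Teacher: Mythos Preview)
Your proposal is correct and follows essentially the same approach as the paper's proof: both split $\Theta_i$ into the cells $(\theta_i^{k-1},\theta_i^k]$, use the $L_\theta$-Lipschitz continuity of $f_i$ in the type to freeze the third argument at $\theta_i^k$ (your Pass~(1), the paper's inequality \eqref{eq_EU}), invoke DBNE optimality on each cell (your Pass~(2), the paper's \eqref{eq_DBNE}), and then bridge the discrete conditional law $\hat P_i(\cdot|\theta_i^k)$ with the true $p_i(\cdot|\theta_i)$ on $I_k$ via regularity of the density (your Pass~(3), the paper's \eqref{eq_convert}--\eqref{eq_DBNE_1}). The only organizational difference is that you insert the intermediate best-response value $V(\theta_i)$ and compare two minima, whereas the paper applies the DBNE inequality pointwise against $\sigma_i(\theta_i)$ and then converts the resulting integral; the two orderings are equivalent. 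One small caveat: the $O(\max_k(\theta_i^k-\theta_i^{k-1}))$ rate in Pass~(3) does not follow from the L'H\^opital limit of Lemma~\ref{lem_br} alone --- that gives only convergence --- but from Lipschitz continuity of $p$, which is precisely what the paper invokes at the same juncture.
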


\begin{proof}
Due to the Lipschitz continuity of $f_i$, $|f_i(x_i,x_{-i},\theta_i)-f_i(x_i,x_{-i},\theta_i')|\leq L_\theta \lVert \theta_i-\theta_i'\rVert$ for $x_i\in X_i,x_{-i}\in X_{-i},\theta_i,\theta_i'\in\Theta_i$. Denote $EU_i(\sigma_i,\bar{\sigma})=\int_{\Theta_i} f_i(\sigma_i,\bar{\sigma},\theta_i)p_i(\theta_i)d\theta_i$ and $E\hat{U}_i=\sum_{\hat{\Theta}_i}f_i(\hat{\sigma}_i,\bar{\sigma},\theta_i)\hat{P}_i(\theta_i)$. Then for any $\hat{\sigma}_i\in\hat{\Sigma}_i$ and $\hat{\sigma}_{-i}\in\hat{\sigma}_{-i}$,
\begin{equation}\label{eq_EU}
\begin{aligned}
EU_i(\hat{\sigma}_i,\bar{\sigma})=&
\sum_{k=1}^N \int_{\theta_i^{k-1}}^{\theta_i^k} U_i(\hat{\sigma}_i,\bar{\sigma},\theta_i)p_i(\theta_i)d\theta_i\\
=&\sum_{k=1}^N \int_{\theta_i^{k-1}}^{\theta_i^k} (U_i(\hat{\sigma}_i,\bar{\sigma},\theta_i)-U_i(\hat{\sigma}_i,\bar{\sigma},\theta_i^k)+U_i(\hat{\sigma}_i,\bar{\sigma},\theta_i^k))p_i(\theta_i)d\theta_i\\
\leq& E\hat{U}_i(\hat{\sigma}_i,\bar{\sigma})+L(\overline{\theta}-\underline{\theta})\epsilon_0.
\end{aligned}
\end{equation}

Denote the aggregate of the DBNE by $\hat{\bar{\sigma}}^*$. From the definition of DBNE, for any $\theta_i\in\hat{\Theta}_i$ and $\hat{\sigma}_i\in \hat{\sigma}_i$,
\begin{equation}\label{eq_DBNE}
\hat{U}_i(\hat{\sigma}_i^*,\hat{\bar{\sigma}}^*,\theta_i)\leq \hat{U}_i(\hat{\sigma}_i,h_i(\hat{\sigma}_i)+h_{-i}(\hat{\bar{\sigma}}^*_{-i}),\theta_i).
\end{equation}
Then we convert the DBNE from the discretized model to the continuous-type model. Due to Assumption \ref{ass_game}(ii), the distribution $P$ is continuous and thus $p$ is $L_p$-Lipschitz continuous over $\boldsymbol{\Theta}$. Therefore, for any $\theta_i\in(\theta_i^{k-1},\theta_i^k]$, $k\in[N]$,
\begin{equation*}\begin{aligned}
&\left|p(\theta_i,\theta_{-i})-\frac{1}{\theta_i^k-\theta_i^{k-1}}\int_{\theta_i^{k-1}}^{\theta_i^k}p(\theta_i',\theta_{-i})d\theta_i'\right|\\
\leq &\frac{1}{\theta_i^k-\theta_i^{k-1}}\int_{\theta_i^{k-1}}^{\theta_i^k}|p(\theta_i,\theta_{-i}),p(\theta_i',\theta_{-i})|d\theta_i'\leq L_p\epsilon_0.
\end{aligned}
\end{equation*}
Thus, for any $\sigma_i\in\Sigma_i$ and $k\in[N]$,
\begin{equation}\label{eq_convert}
\begin{aligned}
&\int_{\theta_i^{k-1}}^{\theta_i^k}\int_{\Theta_{-i}}f_i(\sigma_i(\theta_i),H(\sigma_i(\theta_i),\hat{\bar{\sigma}}^*_{-i}(\theta_{-i})),\theta_i^k)p(\theta_i,\theta_{-i})d\theta_id\theta_{-i}\\
\geq&\int_{\theta_i^{k-1}}^{\theta_i^k}\int_{\Theta_{-i}}f_i(\sigma_i(\theta_i),H(\sigma_i(\theta_i),\hat{\bar{\sigma}}^*_{-i}(\theta_{-i})),\theta_i^k)\frac{\int_{\theta_i^{k-1}}^{\theta_i^k}p(\theta_i',\theta_{-i})d\theta_i'}{\theta_i^k-\theta_i^{k-1}}d\theta_i d\theta_{-i}\\&-ML_p(\theta_i^k-\theta_i^{k-1})\mu(\Theta_{-i})\epsilon_0\\
=&\hat{U}_i(\sigma_i,H(\sigma_i,\hat{\bar{\sigma}}^*_{-i}),\theta_i^k)\hat{P}_i(\theta_i^k)-ML_p(\theta_i^k-\theta_i^{k-1})\mu(\Theta_{-i})\epsilon_0.
\end{aligned}
\end{equation}
With \eqref{eq_DBNE} and \eqref{eq_convert}, for any $\sigma_i\in\Sigma_i$,
\begin{equation}\label{eq_DBNE_1}\begin{aligned}
E\hat{U}_i(\hat{\sigma}_i^*,\hat{\bar{\sigma}}^*)\leq& \sum_{i=1}^N \int_{\theta_i^{k-1}}^{\theta_i^k}\int_{\Theta_{-i}}(f_i(\sigma_i(\theta_i),H(\sigma_i(\theta_i),\hat{\bar{\sigma}}^*_{-i}(\theta_{-i})),\theta_i^k)p(\theta_i,\theta_{-i})d\theta_id\theta_{-i}\\&+ML_p(\theta_i^k-\theta_i^{k-1})\mu(\Theta_{-i})\epsilon_0)\\
=&EU(\sigma_i,\hat{\bar{\sigma}}^*)+ML_p\mu(\boldsymbol{\Theta})\epsilon_0.\end{aligned}
\end{equation}
Define $C=ML_p\mu(\boldsymbol{\Theta})+L_\theta$. Combining \eqref{eq_EU} and \eqref{eq_DBNE_1}, for any $\sigma_i\in\Sigma_i$,
$$EU(\hat{\sigma}_i^*,\hat{\bar{\sigma}}^*)\leq E\hat{U}(\hat{\sigma}_i^*,\hat{\bar{\sigma}}^*)+L_\theta\epsilon_0\leq EU_i(\sigma_i,H(\sigma_i,\hat{\bar{\sigma}}^*_{-i}))+C\epsilon_0,$$
which means that the DBNE is an $\epsilon$-BNE with $\epsilon=C\epsilon_0$.
\end{proof}

\section{Distributed algorithm}\label{sec_alg}

In this section, we propose the distributed algorithm for the BNE of the discretized model $\hat{G}$, namely an $\epsilon$-BNE of the continuous-type model $G$. 

At time $t$, player $i$ estimates the aggregate according to neighbors' approximations as
\begin{equation}\label{eq_est}
u_i^t=\sum_{j=1}^n [W(t)]_{ij}v_j^t,
\end{equation}
where $v_j^t$ is the approximation of the aggregate made by player $j$ at time $t$.
Due to the uncertainties, we can also regard $u_i^t(\tilde{\theta})$ as a function mapping from $\hat{\tilde{\Theta}}$ to $\mathbb{R}^m$, where $\tilde{\theta}$ follows the discrete distribution $\hat{\bar{P}}$ as defined in Section \ref{sec_dis}. Then player $i$ evaluates its subgradient as
\begin{equation}\label{eq_gradient}
g_i^t=(F_i(\sigma_i^t,u_i^t,\theta_i^1),\dots,F_i(\sigma_i^t,u_i^t,\theta_i^N))/N,
\end{equation}
where $F_i(\sigma_i^t,u_i^t,\theta_i^k)$ ($k\in[N])$ was defined in Section \ref{sec_form}. We summarize the above procedures as follows.\\ \vspace{-1em}

\begin{algorithm}[h]
\caption{Algorithm for an $\epsilon$-BNE of continuous-type Bayesian games}\label{alg_main}\label{alg}
\begin{algorithmic}
\STATE \textbf{Initialization}: For $i\in I$: take $\sigma_{i}(0)\in\widetilde{\Sigma}_i$, and $v_i(0)=h_i(\sigma_i(0))$. 	
\STATE \textbf{Discretization}: For $i\in I$, take $N$ discrete points from the type set $\Theta_i$ as \eqref{eq_dispoints}.
\STATE Iterate until $t\geq T$:
\STATE \textbf{Communicate and Update}: Player $i$ evaluates the aggregate of neighbors $u_i(t)$ based on \eqref{eq_est} and the gradient $g_i^t$ based on \eqref{eq_gradient}, then updates $\sigma_{i}(t)$ and its observation $v_i(t)$ by
$$\sigma_i^{t+1}=\Pi_i(\sigma_i^t-\alpha(t)g_i^t),$$ 
$$v_i^{t+1}=u_i^t-h_i(\sigma_i^t)+h_i(\sigma_i^{t+1}).$$
\end{algorithmic}
\end{algorithm}

The stepsize $\alpha(t)$ taken in Algorithm \ref{alg} satisfies
\begin{enumerate}\renewcommand\labelenumi{(\alph{enumi})}
\item $\alpha(t)$ is a positive non-increasing sequence.
\item $\sum_{t=0}^\infty \alpha(t)=\infty$, $\sum_{t=0}^\infty \alpha^2(t)<\infty$.
\end{enumerate}

Therefore, we give the following main result of this paper to show the convergence of Algorithm \ref{alg} to an $\epsilon$-BNE, or the DBNE($N$) with an explicit error bound $\epsilon$.
\begin{theorem}\label{thm_conv}
Under Assumption \ref{ass_game} and \ref{ass_graph}, Algorithm \ref{alg_main} generates a sequence that convergent to the DBNE of $\hat{G}$, which is an $\epsilon$-BNE of $G$, with $\epsilon=O(\max_{i\in I}\max_{k\in[N]}(\theta_i^k-\theta_i^{k-1}))$.
\end{theorem}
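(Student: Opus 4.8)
The plan is to reduce the statement, via Theorem~\ref{thm_bne}, to the single claim that the iterates $\boldsymbol{\sigma}^t=(\sigma_1^t,\dots,\sigma_n^t)$ generated by Algorithm~\ref{alg_main} converge to the unique DBNE $\hat{\boldsymbol{\sigma}}^*$ of $\hat{G}$; once that is shown, the $\epsilon$-BNE conclusion and the bound $\epsilon=O(\max_{i}\max_{k}(\theta_i^k-\theta_i^{k-1}))$ are inherited verbatim from Theorem~\ref{thm_bne}. The proof of the convergence claim has two ingredients, afterwards combined in a Lyapunov recursion: (i) an \emph{aggregate-tracking} estimate, saying that each local estimate $u_i^t$ asymptotically agrees with the true aggregate $\bar{\sigma}^t=\sum_j h_j(\sigma_j^t)$ of the current strategy iterates, with a tracking error whose weighted sum is finite; and (ii) a projected pseudo-gradient descent estimate exploiting the (strict) monotonicity of the game map of $\hat{G}$ together with the variational characterization of $\hat{\boldsymbol{\sigma}}^*$.

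For (i), I would first sum the update $v_i^{t+1}=u_i^t-h_i(\sigma_i^t)+h_i(\sigma_i^{t+1})$ over $i$; double stochasticity of $W(t)$ (Assumption~\ref{ass_graph}) gives $\sum_i u_i^t=\sum_i v_i^t$, so induction from the initialization $v_i(0)=h_i(\sigma_i(0))$ yields $\sum_i v_i^t=\bar{\sigma}^t$ for all $t$ (up to the fixed $1/n$ normalization convention absorbed into the $h_i$, as in \cite{koshal2016}). Writing the $v_i^t$ recursion through the transition matrices $\Phi(t,s)$ and invoking the geometric mixing bound $|[\Phi(t,s)]_{ij}-1/n|\le\Gamma\beta^{t-s}$ of Lemma~\ref{lem_consensus}, together with the fact that consecutive iterates barely move, $\lVert\sigma_i^{t+1}-\sigma_i^t\rVert\le\alpha(t)\lVert g_i^t\rVert\le C_g\,\alpha(t)$ — here the gradients $g_i^t$ are uniformly bounded because the $X_i$ are compact (Assumption~\ref{ass_game}(i)) and $F_i$ is Lipschitz (Assumption~\ref{ass_game}(iv)), which also keeps the $u_i^t$ bounded since the consensus contraction is driven by vanishing inputs $h_i(\sigma_i^{t+1})-h_i(\sigma_i^t)=O(\alpha(t))$ — I obtain $e_i^t:=\lVert u_i^t-\bar{\sigma}^t\rVert\to0$ and, crucially, $\sum_t\alpha(t)e_i^t<\infty$; the last bound is the standard consequence of $\sum_t\alpha^2(t)<\infty$ for consensus-plus-descent schemes, cf.\ \cite{koshal2016,nedic2010}.

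For (ii), I would take the Lyapunov function $V^t=\sum_{i\in I}\lVert\sigma_i^t-\hat{\sigma}_i^*\rVert^2$. Non-expansiveness of $\Pi_i$ and expansion of the square give
\[
V^{t+1}\le V^t-2\alpha(t)\sum_i\big\langle g_i^t,\ \sigma_i^t-\hat{\sigma}_i^*\big\rangle+\alpha^2(t)\sum_i\lVert g_i^t\rVert^2 .
\]
Splitting $g_i^t$ into the discretized pseudo-gradient $\tilde{g}_i^t$ evaluated at the true aggregate plus an error of size $L_u e_i^t$ (Assumption~\ref{ass_game}(iv)), using that $\hat{\boldsymbol{\sigma}}^*$ solves the associated variational inequality so $\sum_i\langle\tilde g_i(\hat{\boldsymbol{\sigma}}^*),\sigma_i-\hat{\sigma}_i^*\rangle\ge0$, and invoking monotonicity of the game map of $\hat{G}$ (which follows from convexity of $f_i$ in $x_i$, Assumption~\ref{ass_game}(iii), and is in fact strict — the property underlying the uniqueness asserted in the preceding lemma), I get $\sum_i\langle\tilde g_i^t,\sigma_i^t-\hat{\sigma}_i^*\rangle\ge0$, whence, bounding $\lVert\sigma_i^t-\hat{\sigma}_i^*\rVert$ by $\sup_{x,y\in X_i}\lVert x-y\rVert$,
\[
V^{t+1}\le V^t-2\alpha(t)\sum_i\big\langle\tilde g_i^t,\ \sigma_i^t-\hat{\sigma}_i^*\big\rangle+\alpha(t)\,\epsilon_t+\alpha^2(t)\,C',
\]
with $\sum_t(\alpha(t)\epsilon_t+\alpha^2(t)C')<\infty$ by step~(i) and the stepsize conditions. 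A standard almost-supermartingale (deterministic) lemma then shows $V^t$ converges and $\sum_t\alpha(t)\sum_i\langle\tilde g_i^t,\sigma_i^t-\hat\sigma_i^*\rangle<\infty$; since $\sum_t\alpha(t)=\infty$, a subsequence has $\sum_i\langle\tilde g_i^t,\sigma_i^t-\hat\sigma_i^*\rangle\to0$, and strict monotonicity forces $\boldsymbol{\sigma}^t\to\hat{\boldsymbol{\sigma}}^*$ along it, hence $V^t\to0$ by the already-established convergence of $V^t$. (If Assumption~\ref{ass_game}(iii) is strengthened to strong convexity, the map is strongly monotone, $V^{t+1}\le(1-2\mu\alpha(t))V^t+\text{summable}$, and $V^t\to0$ follows directly, with a rate.) Combining $\boldsymbol{\sigma}^t\to\hat{\boldsymbol{\sigma}}^*$ with Theorem~\ref{thm_bne} finishes the proof.

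I expect ingredient (i) to be the main obstacle: making rigorous the two-time-scale coupling between the consensus recursion and the gradient iteration — in particular, showing the individual estimates $u_i^t$ remain bounded and track the aggregate with an error $e_i^t$ that is not merely $o(1)$ but satisfies $\sum_t\alpha(t)e_i^t<\infty$ — since every downstream step in the Lyapunov recursion hinges on that summability.
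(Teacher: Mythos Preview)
Your proposal is correct and follows essentially the same route as the paper: first show by induction (using double stochasticity) that the running average of the $v_i^t$ equals the true aggregate, then unroll the $v_i$-recursion through the transition matrices $\Phi(t,s)$ and use Lemma~\ref{lem_consensus} together with $\lVert\sigma_i^{t+1}-\sigma_i^t\rVert\le\alpha(t)\lVert g_i^t\rVert$ to get $\sum_t\alpha(t)\lVert u_i^t-\bar v^t\rVert<\infty$, and finally run the Lyapunov recursion on $\sum_i\lVert\sigma_i^t-\hat\sigma_i^*\rVert^2$, split off the tracking error via the $L_u$-Lipschitz bound on $F_i$, invoke strict monotonicity of the stacked map $E\boldsymbol F$ to extract a subsequence converging to $\hat{\boldsymbol\sigma}^*$, and upgrade to full-sequence convergence since $V^t$ has a limit. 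The only cosmetic difference is that the paper absorbs the VI optimality of $\hat{\boldsymbol\sigma}^*$ into the projection step (writing $\lVert\Pi(\sigma_i^t-\alpha(t)g_i^t)-\hat\sigma_i^*\rVert\le\lVert\sigma_i^t-\hat\sigma_i^*-\alpha(t)(g_i^t-g_i^*)\rVert$) rather than invoking it separately as you do, but the resulting inequality and the rest of the argument are the same.
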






\begin{proof}
Define the average of the estimations $v_i^t$ as $\bar{v}^t=\sum_{i=1}^n v_i^t/n$. Firstly, we prove that $\bar{v}^t=\sum_{i=1}^n h_i(\sigma_i^t)/n$ by induction on $t$.

For $t=0$, the above relation holds trivially. Since the adjacency matrices have columns sum up to 1,  assume that holds for $t-1$, as the induction step,
\begin{align*}
\sum_{i=1}^n v_i^t=&\sum_{i=1}^n (u_i^{t-1}+h_i(\sigma_i^t)-h_i(\sigma_i^{t-1}))=\sum_{i=1}^n\bigg(\sum_{j=1}^n[W(t-1)]_{ij}v_i^{t-1}+h_i(\sigma_i^t)-h_i(\sigma_i^{t-1})\bigg)\\
=&\sum_{i=1}^n (v_i^{t-1}+h_i(\sigma_i^t)-h_i(\sigma_i^{t-1}))=\sum_{i=1}^n h_i(\sigma_i^t).
\end{align*}
Thus, $\bar{v}^t=\sum_{i=1}^n h_i(\sigma_i^t)/n$ holds for all $t\geq0$.

Secondly, we establish a relation between the estimations $u_i^t$ and the average $\bar{v}^t$. From the update rule of $v_i^t$, for $t\geq 1$,
\begin{equation}\label{eq_est1}
u_i^{t}=\sum_{j=1}^n [\Phi(t,0)]_{ij}v_j^0+\sum_{r=1}^{t}\sum_{j=1}^n[\Phi(t,r)]_{ij}(h_i(\sigma_j^r)-h_i(\sigma_j^{r-1})).
\end{equation}
Then, for $t\geq 1$, $\bar{v}^t$ can be reconstructed as
\begin{equation}\label{eq_ave_est}
\bar{v}^t=\frac{1}{n}\sum_{j=1}^n  v_j^0+\frac{1}{n}\sum_{r=1}^{t}(\sum_{j=1}^n (h_i(\sigma_j^r)-h_i(\sigma_j^{r-1}))).
\end{equation}
Since $h_i$ is linear, there exists a constant $C>0$ such that $h_i$ is $C$-Lipschitz continuous for $i\in I$. Due to the property of projection, $\lVert\Pi_{\hat{\Sigma}_i}( \sigma_i^t-\alpha(t)g_i^t)-\sigma_i^t\rVert\leq\alpha(t)\lVert g_i^t\rVert\leq \alpha(t)D$ for $\sigma_i^t\in\hat{\Sigma}_i$. Combining \eqref{eq_est1} and \eqref{eq_ave_est}, with Lemma \ref{lem_consensus}, for $t\geq 1$,
\begin{equation*}
\lVert u_i^t-\bar{v}^t\rVert\leq \Gamma \beta^tR_0+C\Gamma D\sum_{r=1}^{t} \beta^{t-r}\alpha(r),
\end{equation*} 
where $R_0=\sum_{j=0}^n v_j^0$. Since $\alpha(t)$ is non-increasing,
\begin{equation}\label{eq_aggregate}
\sum_{t=0}^{\infty} \alpha(t)\lVert v_i^t-\bar{v}^t\rVert \leq \Gamma \frac{1}{1-\beta}R_0\alpha(0)+C\Gamma D \frac{1}{1-\beta}\sum_{t=0}^\infty \alpha^2(t).
\end{equation}
Because the stepsize $\alpha(t)$ satisfies $\sum_{t=0}^\infty \alpha(t)=\infty$ and $\sum_{t=0}^\infty \alpha^2(t)<\infty$, $\sum_{t=0}^\infty\alpha(t)\lVert v_i^t-\bar{v}^t\rVert<\infty$.

Thirdly, we give the convergence result. With the update rule of $\sigma_i^t$,
\begin{equation*}
\lVert \sigma_i^{t+1}-\hat{\sigma}_i^*\rVert=\lVert \Pi_{\hat{\Sigma}_i}(\sigma_i^t-\alpha(t)g_i^t)-\hat{\sigma}_i^*\rVert
\leq \lVert \sigma_i^t-\hat{\sigma}_i^*-\alpha(t)(g_i^t+g_i^*)\rVert
\end{equation*}
where $g_i^*=col(F_i(\hat{\sigma}_i^*,\hat{\bar{\sigma}}^*,\theta_i^1),\dots,F_i(\hat{\sigma}_i^*,\hat{\bar{\sigma}}^*,\theta_i^N))$. Then
\begin{equation}\label{eq_lya}
\lVert \sigma_i^{t+1}-\hat{\sigma}_i^*\rVert^2\leq \lVert \sigma_i^t-\hat{\sigma}_i^*\rVert^2+\alpha^2(t)\lVert g_i^t-g_i^*\rVert^2-2\alpha(t)\left<g_i^t-g_i^*,\sigma_i^t-\hat{\sigma}_i^*\right>
\end{equation}
To prove that $\sigma_i^t$ converges to $\hat{\sigma}_i^*$, we need to show that $\sum_{t=0}^\infty \alpha(t)\lVert \sigma_i^t-\sigma_i^*\rVert<\infty$. Since $\lVert g_i^t\rVert\leq \sqrt{N}D$ and $\lVert g_i^*\rVert\leq \sqrt{N}D$, 
$$\sum_{t=0}^\infty\alpha^2(t)\lVert g_i^t-g_i^*\rVert<\infty.$$
Based on the property of subgradients, for $\theta_i^k\in\hat{\Theta}_i$,
\begin{equation*}
N\left<g_i^t,\sigma_i^t-\hat{\sigma}_i^*\right>=\sum_{k=1}^N\left<F_i(\sigma_i^t,u_i^t,\theta_i^k)-F_i(\hat{\sigma}_i^*,\hat{\bar{\sigma}}^*,\theta_i^k),\sigma_i^t(\theta_i^k)-\hat{\sigma}_i^*(\theta_i^k)\right>
\end{equation*}
Since $F$ is Lipschitz continuous,
\begin{equation*}
F_i(\sigma_i^t,u_i^t,\theta_i^k)-F_i(\sigma_i^t,\bar{v}^t,\theta_i^k)\rVert\leq L_u \lVert u_i^t(\theta_i^k)-\bar{v}^t(\theta_i^k)\rVert,
\end{equation*}
where $u_i^t(\theta_i^k)$ and $\bar{v}^t(\theta_i^k)$ denoted the aggregates $u_i^t(\tilde{\theta}|\theta_i^k)$ and $\bar{v}^t(\tilde{\theta}|\theta_i^k)$ with $\tilde{\theta}\in\{\sum_{i=1}^n\theta_i/n,$
$\theta_i=\theta_i^k,\theta_{-i}\in\Theta_{-i}\}$, satisfying $\lVert u\rVert^2=\sum_{k=1}^N \lVert u(\theta_i^k)\rVert^2$ for $i\in I$ and $u=u_i^t,\bar{v}^t$. Then
\begin{equation*}\begin{aligned}
&N\left<g_i^t-g_i^*,\sigma_i^t-\hat{\sigma}_i^*\right>\\
\leq &\sum_{k=1}^N \left<F_i(\sigma_i^t,\bar{v}^t,\theta_i^k)-F_i(\hat{\sigma}_i^*,\hat{\bar{\sigma}}^*,\theta_i^k),\sigma_i^t(\theta_i^k)-\hat{\sigma}_i^*(\theta_i^k)\right>+\sqrt{N}L_u\lVert u_i^t-\bar{v}^t\rVert.
\end{aligned}
\end{equation*}
Denote $EF_i(\sigma_i,\bar{\sigma})=\sum_{k=1}^N F(\sigma_i,\bar{\sigma},\theta_i^k)$ for $\sigma\in\hat{\boldsymbol\Sigma}$, where $\bar{\sigma}$ is the aggregate of $\boldsymbol\sigma$, and $E\boldsymbol F(\boldsymbol{\sigma})=col(EF_1(\sigma_1,\bar{\sigma}),$
$\dots,EF_n(\sigma_n,\bar{\sigma}))$. From the strict convexity of $f_i$, $EU_i$ is strict convex, and thus, $$\left<E\boldsymbol{F}(\boldsymbol{\sigma}^t,H(\boldsymbol\sigma^t))-E\boldsymbol{F}(\boldsymbol{\sigma}^*,H(\hat{\boldsymbol\sigma}^*)),\boldsymbol{\sigma}^t-\hat{\boldsymbol{\sigma}}^*\right><0.$$ Combining with \eqref{eq_aggregate}, $\sum_{i=1}^n\lVert \sigma_i^t-\hat{\sigma}_i^*\rVert^2=\lVert \boldsymbol\sigma^t-\hat{\boldsymbol\sigma}^*\rVert^2$ is convergent. Furthermore, 
$$\sum_{t=0}^\infty \alpha(t)\left<E\boldsymbol{F}(\boldsymbol{\sigma}^t,H(\boldsymbol\sigma^t))-E\boldsymbol{F}(\boldsymbol{\sigma}^*,H(\hat{\boldsymbol\sigma}^*)),\boldsymbol{\sigma}^t-\hat{\boldsymbol{\sigma}}^*\right>\leq\infty.$$
Since $\sum_{t=0}^\infty\alpha(t)=\infty$, there exists a subsequence $\{t_l\}$ such that
$$\lim_{l\to\infty}\left<E\boldsymbol{F}(\boldsymbol{\sigma}^t,H(\boldsymbol\sigma^t))-E\boldsymbol{F}(\boldsymbol{\sigma}^*,H(\hat{\boldsymbol\sigma}^*)),\boldsymbol{\sigma}^t-\hat{\boldsymbol{\sigma}}^*\right>=0.$$
Due to the strict convexity of $f_i$, $E\boldsymbol F$ is strictly monotone, and thus $\lim_{l\to\infty}\lVert \boldsymbol\sigma^{t_l}-\hat{\boldsymbol\sigma}^*\rVert=0$. Since $\lVert \boldsymbol\sigma^t-\hat{\boldsymbol\sigma}^*\rVert$ is convergent,
$$\lim_{t\to\infty}\lVert \boldsymbol\sigma^t-\hat{\boldsymbol\sigma}^*\rVert=0.$$
Therefore, we complete the proof.
\end{proof}

\section{Numerical Simulations}\label{sec_exp}
In this section, we provide numerical simulations to illustrate the effectiveness of Algorithm \ref{alg} on aggregative Bayesian games.

Consider a Nash-Cournot game played by 5 competitive firms to produce a kind of commodity. Firms face uncertainties in the game, which is referred to the type $\boldsymbol\theta=(\theta_1,\dots,\theta_5)\in[0,1]^5$ and $\theta_1,\dots,\theta_5$ are independent and uniformly distributed over $[0,1]$, respectively. For firm $i$, $i\in\{1,\dots,5\}$, it has a feasible action set $X_i=[0,20]$ and cost function $f_i(x_i,\tilde{x},\theta_i)=(\tilde{x}+1200-20(i-1))x_i+\theta_i x_i^2$, where $\tilde{x}$ is the aggregate of actions. Due to the uncertainties of types, $i$ adopts a strategy $\sigma_i$, which means that when it receives a type $\theta_i\in[1,2]$, it takes $\sigma_i(\theta_i)\in X_i=[0,5]$ as the quantity of the commodity to produce. The expectation of the cost and the aggregation function is defined in Section \ref{sec_form}. Firms exchange their aggregates via a time-varying graph $\mathcal{G}(t)$, which is randomly generated.

\begin{figure}[!t]
\centering
\subfloat[$\theta_i= 1.3$]{
\includegraphics[width=6.5cm]{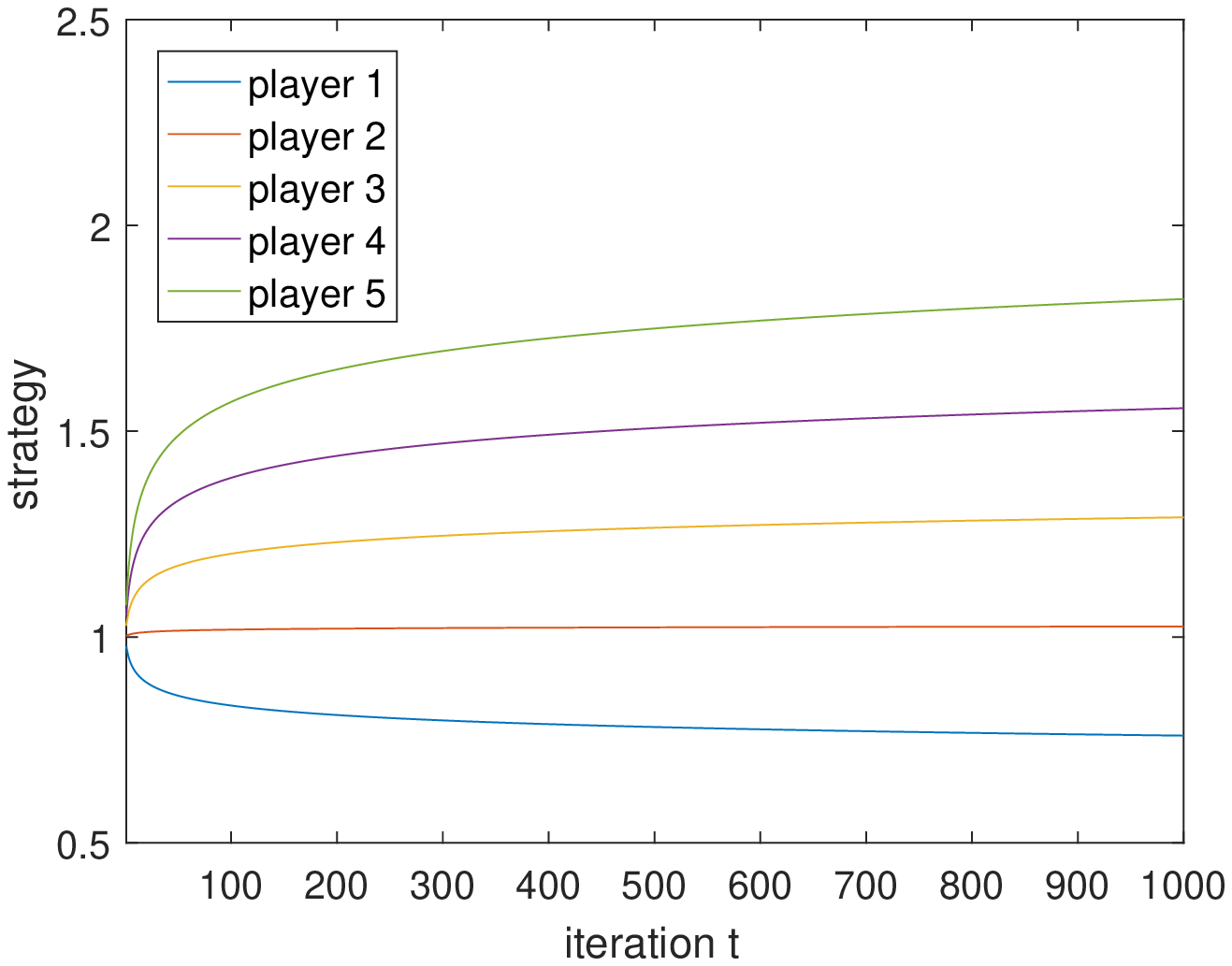}}
\hfill
\subfloat[$\theta_i= 1.7$]{\includegraphics[width=6.5cm]{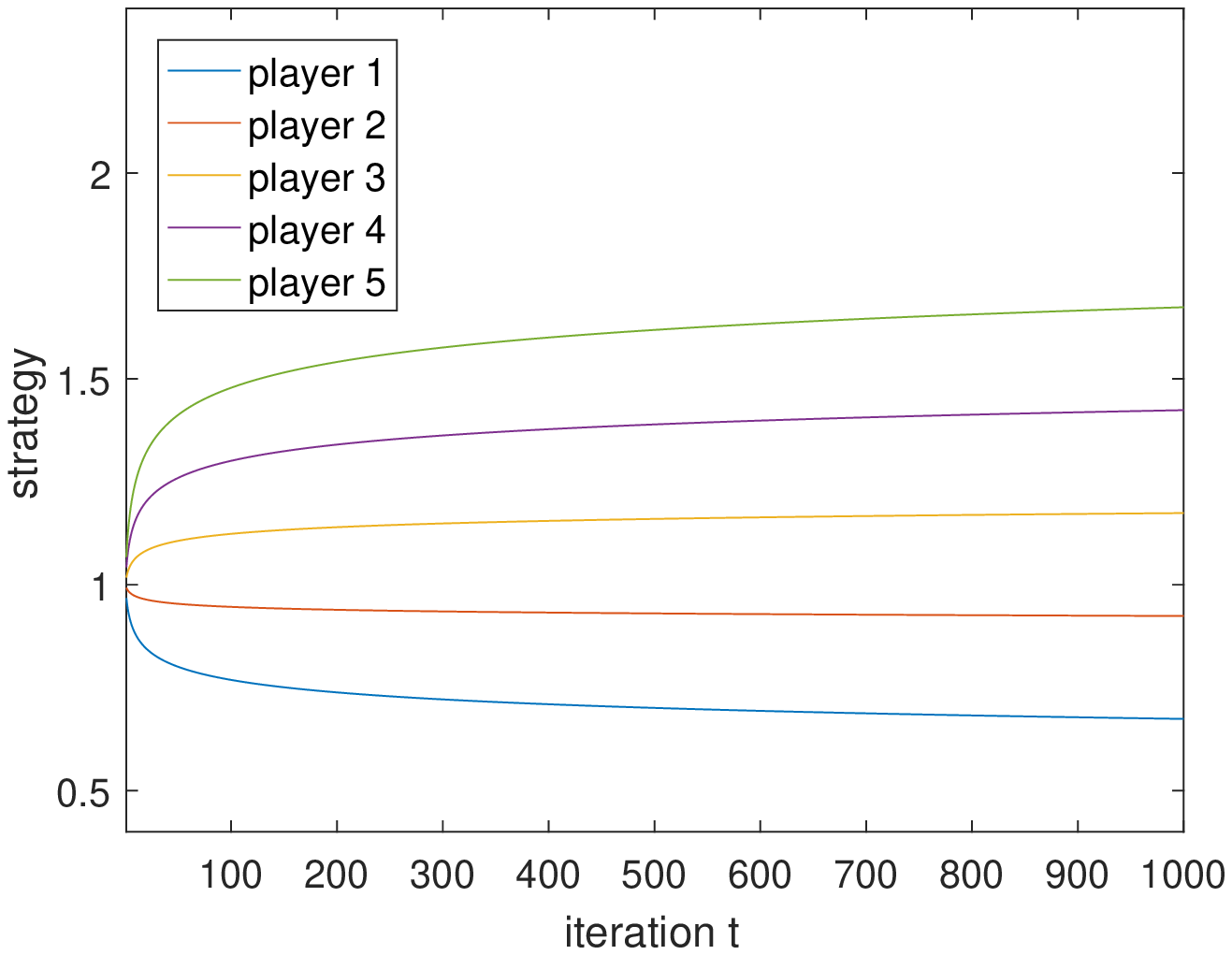}}
\caption{Strategies of players at $\theta_i= 1.3,1.7$ with $N=200$.}\label{fig_1}
\end{figure}

Firstly, we show the convergence of Algorithm \ref{alg_main}. Fig. \ref{fig_1} presents the trajectories of strategies for different players at specific type $\theta_i=1.3,1.7$ under $N=200$. We can see that the generated strategies converge.

Next, we verify the effectiveness of the discretization. Fig. \ref{fig_2} shows the trajectories of strategies for player 3 under different numbers of discrete points $N=50,100,160,200,250$. We can find that the limit points of $\sigma_3^t(1.3)$ and $ \sigma_1^t(1.7)$ converge. Since the error $\epsilon$ tends to 0 as $N$ tends to infinity, we believe that our approximations are close to the true equilibrium. 

\begin{figure}[!h]
\centering
\subfloat[$\theta_1=1.3$]{\includegraphics[width=6.5cm]{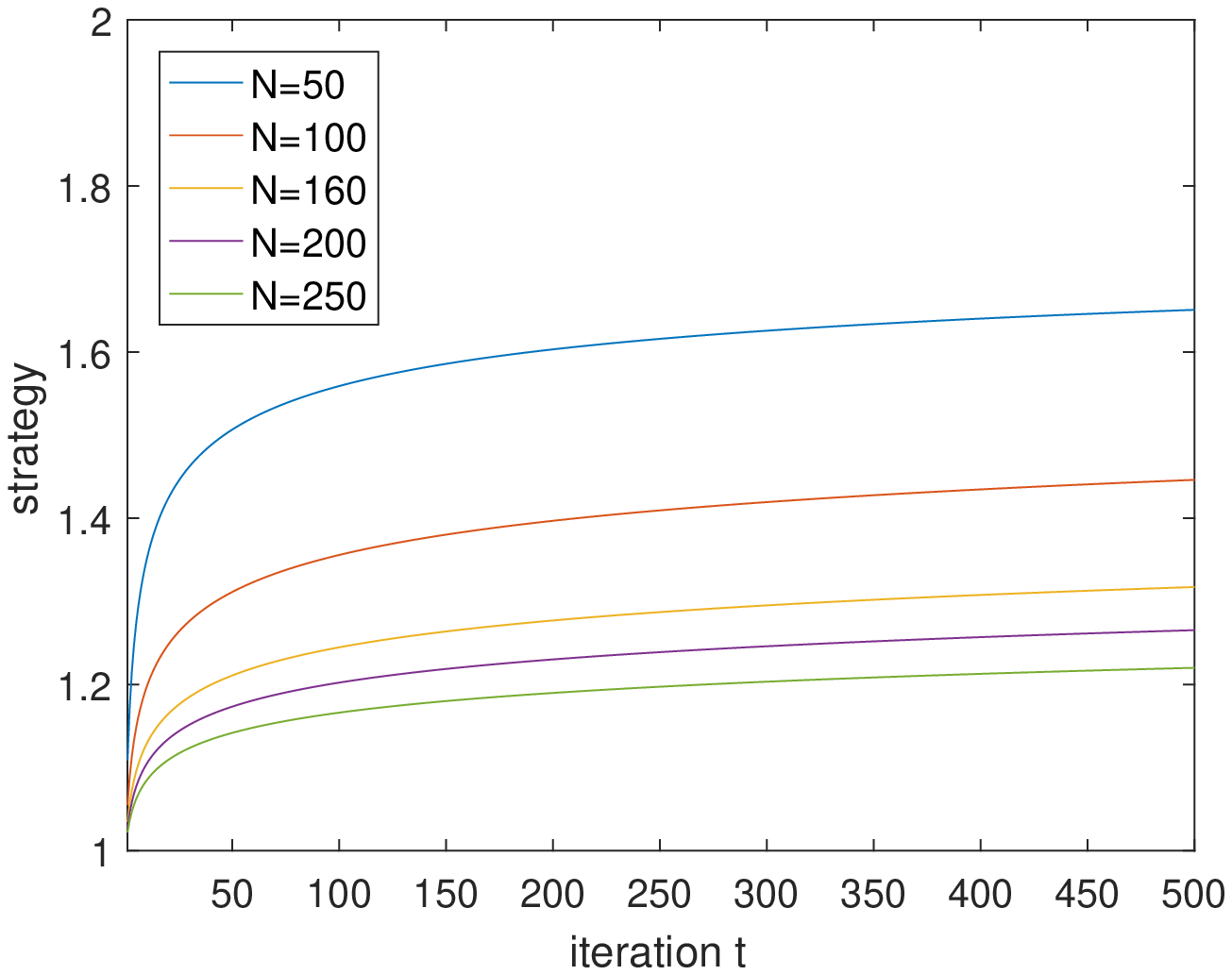}}
\hfill
\subfloat[$\theta_1=1.7$]{\includegraphics[width=6.5cm]{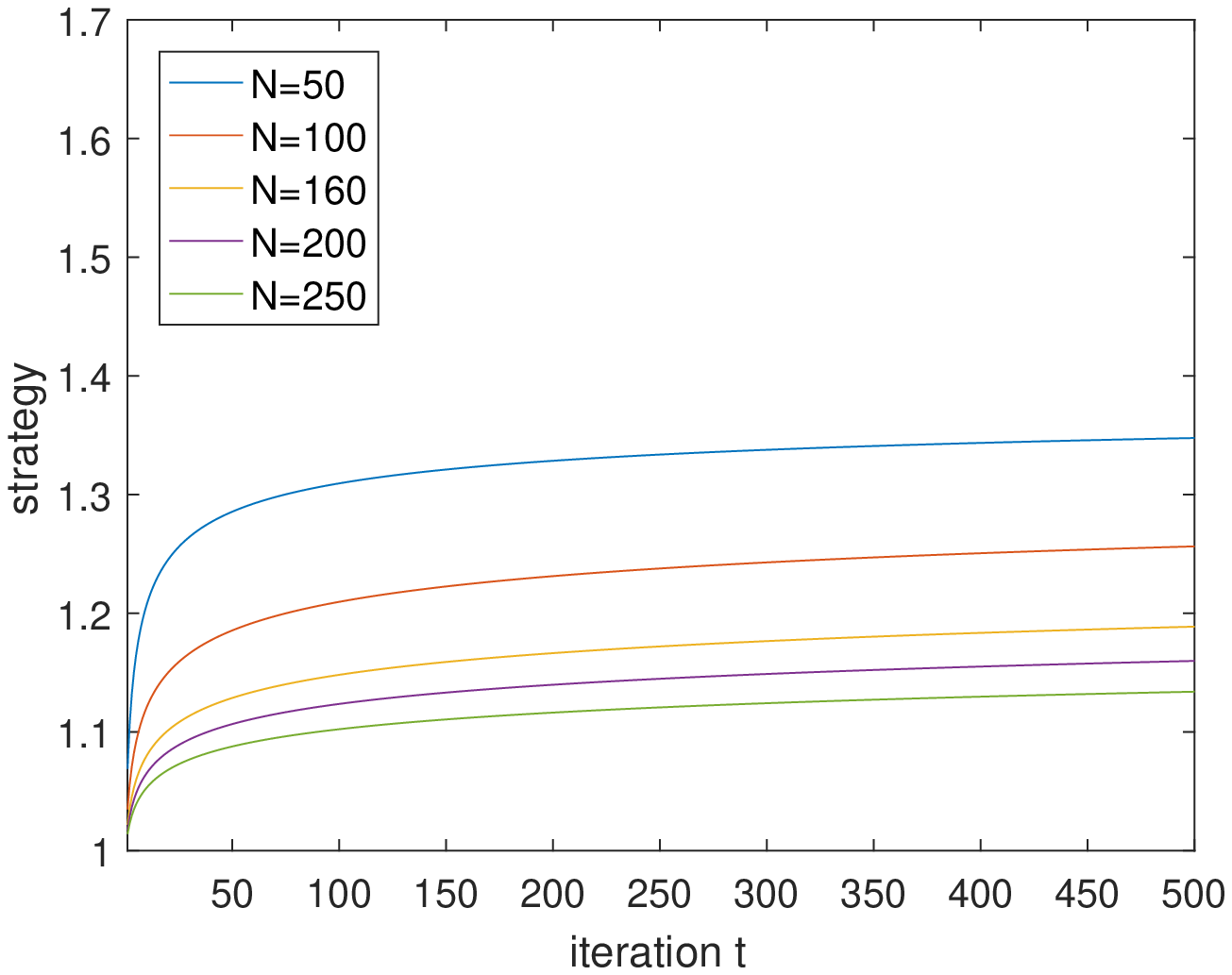}}
\caption{Strategies of player 3 at $\theta_1= 1.3,1.7$ with different $N$.}\label{fig_2}
\end{figure}

%
%

\section{Conclusion}\label{sec_con}
In this paper, we considered an aggregative Bayesian game, which is a generalization of multi-player Bayesian games and deterministic aggregative games. We handled the aggregation function with the incomplete-information situations. To break through barriers in seeking BNE, we provided a discretization method, and proved that the DBNE of the generated discretized model is an $\epsilon$-BNE of the continuous-type model with the explicit error bound. On this basis, we proposed a distributed algorithm for the DBNE of the discretized model, namely an $\epsilon$-BNE of the continuous-type model, and proved its convergence to an $\epsilon$-BNE of the continuous-type model.

\end{document}